\newtheorem{theorem}{Theorem}[section]
\newtheorem{lemma}[theorem]{Lemma}
\theoremstyle{definition}
\newtheorem{definition}[theorem]{Definition}
\newtheorem{example}[theorem]{Example}
\newtheorem{proposition}[theorem]{Proposition}
\newtheorem{corollary}[theorem]{Corollary}
\newtheorem{remark}[theorem]{Remark}
\theoremstyle{remark}
\newcommand{\be}{\begin{equation}}
\newcommand{\ee}{\end{equation}}
\numberwithin{equation}{section}
\begin{document}
\title[Hamitonian $S^1$-action, invariant hypersurface and $\mathbb{P}^n$]{Hamiltonian circle action, invariant hypersurface and the complex projective space}
\author{Ping Li}
\address{School of Mathematical Sciences, Fudan University, Shanghai 200433, China}

\email{pinglimath@fudan.edu.cn\\
pinglimath@gmail.com}
\thanks{The author was partially supported by the National
Natural Science Foundation of China (Grant No. 12371066).}

\subjclass[2010]{53D05, 57R20, 32Q60, 37B05, 58J20.}


\dedicatory{Dedicated to Professor Haibao Duan on the occasion of his 70th birthday.}
\keywords{Hamiltonian circle action, homotopy complex projective space, homology cell, linear action, Chern class, Chern number.}

\begin{abstract}
Let $M$ be a $2n$-dimensional closed symplectic manifold admitting a Hamiltonian circle action with isolated fixed points. We show that if $M$ contains an $S^1$-invariant symplectic hypersurface $D$ such that $M\setminus D$ is a homology cell, which is satisfied when $M\setminus D$ is contractible, then $M$ and $D$ are homotopy complex projective spaces with standard Chern classes and the $S^1$-representations on the fixed-point set of $(M,D)$ are the same as those arising from the standard linear actions on $(\mathbb{P}^n,\mathbb{P}^{n-1})$, provided that $n \not \equiv 3 \pmod 4$. This can be viewed as the transformation group analogue to a recent result obtained by Peternell and the author, where the latter was conjectured by Fujita more than four decades ago.
\end{abstract}

\maketitle

\section{Introduction and main results}
Unless otherwise stated, all manifolds mentioned in this article are smooth, closed, connected, oriented, and real $2n$-dimensional. The orientation of an almost-complex manifold is the canonical one induced from its almost-complex structure. All circle actions on manifolds are nontrivial and smooth, and preserve the almost-complex structures if the manifolds in question are almost-complex. We usually denote by $S^1$, $S^1$-manifold, or $M^{S^1}$ respectively the circle, a manifold equipped with a circle action, or the fixed point set of an $S^1$-manifold $M$. A \emph{hypersurface} $D$ on a manifold $M$ is a submanifold with codimension \emph{two}.

Some topological or differential constraints on manifolds with positivity of curvature in various vague senses share similar feature to those arising from circle actions. The most prominent paradigm in this spirit perhaps is the result that the $\hat{A}$-genus of a spin manifold vanishes if it admits either a positive scalar curvature Riemannian metric or a circle action, which is due to Lichnerowicz (\cite{Lic}) and Atiyah-Hirzebruch (\cite{AH}) respectively. Hattori showed that (\cite[p.7-8]{Ha78})  some Hilbert polynomials of the $\hat{A}$-class vanish on an almost-complex $S^1$-manifold with vanishing first Betti number, while Michelsohn showed that (\cite[p.1142]{Mi} or \cite[p.365]{LM}) exactly the same conclusions hold true for K\"{a}hler manifolds with positive Ricci curvature (this is equivalent to the manifold being Fano due to the Calabi-Yau theorem), to name just a few. We remark that in the above-mentioned two examples, the main tool used in \cite{Lic} and \cite{Mi} is the Atiyah-Singer index theorem as well as the Bochner-Weitzenb\"{o}ck type formula, and that used in \cite{AH} and \cite{Ha78} is the Atiyah-Bott-Singer fixed point theorem.

One classical topic in complex differential geometry and algebraic geometry is to characterize the complex projective space $\mathbb{P}^n$ or more generally the polarized pair $\big(\mathbb{P}^n,\mathcal{O}(1)\big)$
via geometric and/or topological information as little as possible. Among various known characterizations, probably the most useful one is due to Kobayashi and Ochiai (\cite{KO}). Let $M$ be a Fano manifold and write the anti-canonical bundle $-K_M=rH$ with $r\in\mathbb{Z}_{>0}$ and $H$ an ample line bundle. The Kobayashi-Ochiai's theorem asserts that $r\leq n+1$, with equality if and only if $M=\mathbb{P}^n$ and $H$ is necessarily the hyperplane bundle. This characterization has been an indispensable criterion to show that the target manifolds or polarized pairs in various questions are $\mathbb{P}^n$ or $(\mathbb{P}^n,\mathcal{O}(1))$. For instance, it plays a key role in Siu-Yau's famous solution to the Frankel conjecture (\cite{SY}).

From the transformation group point of view, $\mathbb{P}^n$ has a typical linear circle action with $n+1$ isolated fixed points. Write $$\mathbb{P}^n=\big\{[z_1:z_2:\cdots:z_{n+1}]~|~z_i\in\mathbb{C}\big\}$$ and arbitrarily choose $n+1$ \emph{pairwise distinct} integers $a_1,a_2,\ldots,a_{n+1}$. The circle action given by
\be\label{linear action}\begin{split}
&z\cdot[z_1:z_2:\cdots:z_{n+1}]\\
:=&[z^{-a_1}z_1:z^{-a_2}z_2:
\cdots:z^{-a_{n+1}}z_{n+1}],\quad z\in S^1\subset\mathbb{C},\end{split}\ee
has $n+1$ isolated fixed points
$$P_i=[\underbrace{0:\cdots:0}_{i-1},1,0:\cdots:0],\quad 1\leq i\leq n+1.$$
Here we use the action $z^{-a_i}$ instead of $z^{a_i}$ to make the weights of the hyperplane line bundle at $P_i$ are $a_i$ (see Example \ref{example of H}).
As complex $S^1$-modules, the tangent spaces  $T_{P_i}\mathbb{P}^n$ are given by
\be\label{linear representation}T_{P_i}\mathbb{P}^n=\sum_{\overset{1\leq j\leq n+1,}{j\neq i}}t^{a_i-a_j}\in\mathbb{Z}[t,t^{-1}]=R(S^1),\quad 1\leq i\leq n+1.\ee
In other words, the weights of this circle action at $P_i$ are $\{a_i-a_j~|~j\neq i\}$.

Hattori addressed in \cite{Ha85} the question of giving a circle action analogue to the Kobayashi-Ochiai's theorem. Roughly speaking, given an almost-complex $S^1$-manifold $M$ with $n+1$ isolated fixed points, it is shown in \cite{Ha85} that if $c_1(M)=(n+1)c_1(L)$ for a specific complex line bundle $L$ over $M$, the tangent representations at these fixed points resemble (\ref{linear representation}) for some pairwise distinct integers $a_i$ arising from $L$, and $M$ is unitary cobordant to $\mathbb{P}^n$ (see Section \ref{subsection2.4} for details). It is worth mentioning that some results in \cite{Ha78} and \cite{Ha85} are also closely related to a long-standing conjecture of Petrie (\cite{Pe}). A manifold is called a \emph{homotopy complex projective space} (\emph{homotopy $\mathbb{P}^n$} for short)
if it is homotopy equivalent to $\mathbb{P}^n$. (One equivalent form of) Petrie's conjecture asserts that the Pontrjagin classes of a homotopy $\mathbb{P}^n$ equipped with a circle action must be standard. A symplectic generalization to Petrie's conjecture was addressed by Tolman (\cite{To10}).

Motivated by two basic questions of Hirzebruch concerning analytic compactification of $\mathbb{C}^n$ and the uniqueness of $\mathbb{P}^n$ (\cite[p.231]{Hi54}), Fujita proposed in \cite{Fu80} three closely related conjectures of increasing strength, which are called $A_n$, $B_n$ and $C_n$ respectively in it. Among them the weakest $A_n$ asserts that a K\"{a}hler manifold $M$ containing a smooth hypersurface $D$ such that $M\setminus D$ is contractible must be $\mathbb{P}^n$ with $D$ a hyperplane $\mathbb{P}^{n-1}$. The important subcase of $M\setminus D\cong\mathbb{C}^n$, which was also conjectured in \cite[p.151]{BM}, has been solved by C. Li and Zhou in \cite{LZ} (see also \cite{Pet} for the case of even $n$). When $n \not \equiv 3 \pmod 4$, conjecture $A_n$ was recently solved by Peternell and the author (\cite{LP}).
\begin{definition}\label{homology cell} Let $D$ be a hypersurface of a $2n$-dimensional manifold $M$. $M \setminus D$ is called a \emph{homology cell} if the natural homomorphisms $$H_k(D;\mathbb{Z})\longrightarrow H_k(M;\mathbb{Z})$$ are bijective for all $0\leq k\leq 2(n-1)$.
\end{definition}
\begin{remark}\label{definition remark}
It is well-known that $M\setminus D$ being a homology cell implies that the natural homomorphisms $$H^{k}(M;\mathbb{Z})\longrightarrow H^{k}(D;\mathbb{Z})$$ are also bijective for $0\leq k\leq 2(n-1)$ (\cite[Cor.3.4]{Hat}).
\end{remark}

It turns out that the contractibility of $M\setminus D$ implies that it is a homology cell (see Lemma \ref{homo trivial implies homo cell}). Indeed what we solved in \cite{LP} is conjecture $B_n$ in \cite{Fu80} for $n \not \equiv 3 \pmod 4$, which asserts that a K\"{a}hler manifold $M$ containing a smooth hypersurface $D$ such that $M\setminus D$ is a homology cell must be $\mathbb{P}^n$ with $D$ a hyperplane on it. Note that the main tools in \cite{LZ} are of complex and symplectic geometry nature and those in \cite{LP} are of characteristic classes nature, and hence are different. But both proofs rely on the the above-mentioned Kobayashi-Ochiai's criterion.
The hardest conjecture $C_n$, which asserts that a Fano manifold with the same integral cohomology ring as $\mathbb{P}^n$ must be $\mathbb{P}^n$, is only known when $n\leq6$. We refer the reader to \cite[\S 1]{LP} and the references therein for more background on these materials.

Note that with respect to the linear circle action (\ref{linear action}) on $\mathbb{P}^n$, the hyperplane
\be\label{hyperplane}\mathbb{P}^{n-1}=\big\{[z_1:\cdots:z_{n+1}]
\in\mathbb{P}^n~|~z_{n+1}=0\big\}\ee
is $S^1$-invariant with $n$ fixed points $P_1,\ldots,P_{n}$, and as complex $S^1$-modules,
\be\label{linear representation2}T_{P_i}\mathbb{P}^{n-1}=
\sum_{\overset{1\leq j\leq n,}{j\neq i}}t^{a_i-a_j},\quad 1\leq i\leq n.\ee
Moreover, the circle actions on both $\mathbb{P}^n$ and its hyperplane $\mathbb{P}^{n-1}$ are Hamiltonian with respect to their standard symplectic forms induced from their Fubini-Study metrics (see Section \ref{subsection2.3} for more details).

With the principle that similar constraints may be shared by the conditions of positivity curvature and circle action in mind, it is natural to ask whether or not there exists a circle action counterpart to the above-mentioned results in \cite{LP}. \emph{The main purpose} of this article is to prove the following result, which can be regarded as the circle action analogue to the main result in \cite{LP}, where the latter is a solution to conjecture $B_n$ in \cite{Fu80} whenever the dimensions $n \not \equiv 3 \pmod 4$.
\begin{theorem}\label{main theorem}
Let $M$ be a symplectic manifold admitting a Hamiltonian circle action with isolated fixed points. If $M$ contains an $S^1$-invariant symplectic hypersurface $D$ such that $M\setminus D$ is a homology cell. Then
\begin{enumerate}
\item
$M$ and $D$ are both homotopy complex projective spaces.

\item
When $n \not \equiv 3 \pmod 4$, $M$ and $D$ have standard Chern classes and the $S^1$-representations on their fixed points are of the forms (\ref{linear representation}) and (\ref{linear representation2}) respectively for some pairwise distinct integers $a_1,\ldots,a_{n+1}$.

\item
When $n\equiv 3 \pmod 4$, the conclusions above are still true provided that
$$\big(c_1(M),c_1(D)\big)\neq\big(\frac{1}{2}(n+1)x,
\frac{1}{2}(n-1)x_0\big),$$ where $x$ and $x_0$ are generators of $H^2(M;\mathbb{Z})$ and $H^2(D;\mathbb{Z})$ respectively such that $x^n[M]=x_0^{n-1}[D]=1$ and $x|_{D}=x_0$ \rm(see Remark \ref{definition remark}\rm).
\end{enumerate}
\end{theorem}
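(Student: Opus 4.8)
The plan is to first extract all purely topological consequences---which hold for every $n$ and give part (1)---and then to pin down the characteristic classes and weights, where the residue of $n$ modulo $4$ will intervene.

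I would begin with the Morse theory of the moment map $\mu\colon M\to\mathbb{R}$. Its critical set is exactly $M^{S^1}$, the critical points are nondegenerate with only even indices, and $\mu$ is a perfect Morse function; hence $M$ admits a CW-structure with cells in even dimensions only. This gives at once that $M$ is simply connected, that $H^*(M;\mathbb{Z})$ is torsion-free and vanishes in odd degrees, and that the number of fixed points equals $\sum_i b_{2i}(M)$. Since $D$ is an $S^1$-invariant symplectic submanifold, $\mu|_D$ is a moment map for the induced Hamiltonian action on $D$, whose fixed set is $M^{S^1}\cap D$, so the same conclusions hold verbatim for $D$. Next I would feed the homology-cell hypothesis into Poincaré duality. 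By Remark~\ref{definition remark} the restriction $H^k(M;\mathbb{Z})\to H^k(D;\mathbb{Z})$ is an isomorphism for $k\le 2n-2$, so $b_{2i}(M)=b_{2i}(D)$ for $0\le i\le n-1$. Combining this with Poincaré duality on the $2n$-manifold $M$ and on the $2(n-1)$-manifold $D$ and with $b_0=1$ yields $b_{2i}(M)=b_{2(i-1)}(M)$ for $1\le i\le n-1$, whence $b_{2i}(M)=1$ for all $0\le i\le n$ and $b_{2i}(D)=1$ for all $0\le i\le n-1$. In particular $M$ carries $n+1$ fixed points and $D$ carries $n$, with exactly one fixed point $P_i$ of Morse index $2i$ (i.e.\ with $i$ negative and $n-i$ positive weights) for each $i$.

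The crux of part (1) is to upgrade these Betti numbers to the ring structure, namely $H^*(M;\mathbb{Z})\cong\mathbb{Z}[x]/(x^{n+1})$ and $H^*(D;\mathbb{Z})\cong\mathbb{Z}[x_0]/(x_0^{n})$; once this is known, simple connectivity and torsion-freeness make $M$ and $D$ homotopy $\mathbb{P}^n$ and $\mathbb{P}^{n-1}$ by the standard recognition of simply connected spaces with truncated-polynomial cohomology. The restriction $i^*\colon H^*(M)\to H^*(D)$ is a ring isomorphism in degrees $\le 2n-2$, so it transports the ring of $D$ to that of $M$ below the top degree; the only remaining point is that, for a generator $x$ of $H^2(M)$ with $x_0:=i^*x$, the class $x^n$ generates $H^{2n}(M)$. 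Writing $\mathrm{PD}[D]=d\,x$ and using the projection formula one obtains the clean identity $d\cdot x^n[M]=x_0^{\,n-1}[D]$, which reduces the statement for $M$ to the corresponding fact $x_0^{\,n-1}[D]=\pm1$ for $D$ (and forces $d=\pm1$). This is where I expect the main difficulty to lie: establishing the truncated-polynomial ring for $D$. I would handle it by induction on $n$, verifying that $D$ again falls under the hypotheses of the theorem---equivalently, producing inside $D$ an $S^1$-invariant symplectic hypersurface with homology-cell complement out of the moment-map stratification of invariant subvarieties---the base case $n=2$ being immediate because the intersection form of the simply connected $4$-manifold $M$ is unimodular. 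Alternatively one may argue directly from the equivariant cohomology $H^*_{S^1}(M)\hookrightarrow\bigoplus_i H^*_{S^1}(P_i)$ together with the canonical Morse classes dual to the single fixed point of each index.

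For parts (2) and (3) I would now compute characteristic classes. Since $M$ is a homotopy $\mathbb{P}^n$, write $c_1(M)=k\,x$ with $k\in\mathbb{Z}$ and $c_1(D)=k_0\,x_0$; adjunction $c_1(D)=i^*c_1(M)-c_1(N_D)=(k-1)x_0$ forces $k_0=k-1$. To determine $k$ I would invoke the integrality of the equivariant $\hat A$- and Todd-type indices over the $n+1$ isolated fixed points, in the spirit of Hattori (\cite{Ha78},\cite{Ha85}); given one fixed point of each index, these constraints force $k=n+1$ apart from the single competing value $k=\tfrac12(n+1)$, and then $k_0=\tfrac12(n-1)$ by the relation above. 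This alternative is integral only for odd $n$, and the pertinent divisibility---sensitive to the dimension through the signature/$\hat A$ relation---rules it out precisely when $n\not\equiv 3\pmod4$; for $n\equiv 3\pmod 4$ it is exactly the pair excluded in hypothesis (3), and I expect this integrality analysis to be the heart of the difficulty in (2)--(3). Finally, with $c_1(M)=(n+1)x=(n+1)c_1(L)$ for the line bundle $L$ generating $H^2(M)$, Hattori's circle-action analogue of the Kobayashi--Ochiai theorem (Section~\ref{subsection2.4}) supplies pairwise distinct integers $a_1,\dots,a_{n+1}$ for which the tangent representations at the $P_i$ are exactly those of (\ref{linear representation}), and restriction to $D$ yields (\ref{linear representation2}); the total Chern classes are then the standard $c(M)=(1+x)^{n+1}$ and $c(D)=(1+x_0)^{n}$, completing the proof.
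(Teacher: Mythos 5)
Your overall architecture (Morse theory for the fixed-point/Betti-number structure, adjunction along $D$, and Hattori's Theorem \ref{Hattori} for the weights) matches the paper's, but the proposal has genuine gaps at its two most load-bearing points. The most serious one is at the final step: you treat the pairwise distinctness of the integers $a_1,\ldots,a_{n+1}$ as an \emph{output} of Hattori's theorem, whereas in Theorem \ref{Hattori} quasi-ampleness of $L$ --- which by Definition \ref{def of quasi-ample} means precisely that the weights $a_i$ are pairwise distinct, together with $c_1^n(L)[M]\neq0$ --- is a \emph{hypothesis}. Verifying it is the main technical content of Section \ref{section4}: after Lemma \ref{condition n+1} reduces distinctness of the $a_i$ to distinctness of the weight sums $\lambda_i=\sum_{j}k_j^{(i)}$, Lemma \ref{quasi-ample lemma} proves the latter by feeding the Bott residue identities (\ref{Bott residue formula0}) into a Vandermonde-determinant argument: if only $t\leq n$ distinct values occurred, the first $t$ residue equations would force all grouped coefficients $\mu_i$ to vanish and hence $c_1^n[M]=0$, contradicting $c_1^n[M]=(n+1)^n$. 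Nothing in your proposal supplies this step, and without it Theorem \ref{Hattori} cannot be invoked at all. A smaller omission of the same kind occurs for $D$: that the $n-1$ weights of $T_{P_i}D$ are exactly $\{a_i-a_j~|~j\neq i,~1\leq j\leq n\}$, rather than some other $(n-1)$-element subset of $\{a_i-a_j~|~j\neq i,~1\leq j\leq n+1\}$, requires the Condition-C computation with $c_1(D)=nc_1(L_D)$ carried out in Lemma \ref{lemma D}; mere ``restriction to $D$'' does not yield it.

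Two further steps are asserted rather than proved. For part (1), your Poincar\'{e}-duality bookkeeping only determines the Betti numbers, which do not determine the ring structure; your proposed remedy --- induction on $n$ by producing inside $D$ an $S^1$-invariant symplectic hypersurface with homology-cell complement --- is unsupported, since nothing in the hypotheses provides such a divisor in $D$, and the ``moment-map stratification'' does not produce one in general. The paper instead invokes the Sommese--Fujita topological argument (\cite[Lemma 2.1]{LP}, via Lemma \ref{homotopy Pn}), which derives the truncated polynomial rings for both $M$ and $D$ directly from the homology-cell hypothesis together with $b_1(M)=0$, with no induction. For parts (2)--(3), your appeal to ``integrality of equivariant $\hat{A}$- and Todd-type indices'' forcing $k=n+1$ or $\tfrac12(n+1)$ is a gesture, not an argument; the paper's actual mechanism is to compute $\chi_y(M)=\chi_y(\mathbb{P}^n)$ by localization over the $n+1$ fixed points (Lemma \ref{Chern number}), extract $c_1c_{n-1}[M]=\tfrac12 n(n+1)^2$ and $c_1c_{n-2}[D]=\tfrac12(n-1)n^2$ from the coefficient $K_2$ in the Taylor expansion at $y=-1$ (Lemma \ref{Taylor expansion}), and combine these with adjunction in degrees $1$ and $n-1$ (using $c_{n-1}(D)=n$) to obtain the quadratic relation (\ref{3}) whose only roots are $n+1$ and $\tfrac12(n+1)$; the exceptional root is then excluded for $n\not\equiv 3\pmod 4$ by Wu's formula (homotopy invariance of $w_2$), not by a signature/$\hat{A}$ relation. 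As written, the proposal is missing the quasi-ampleness verification and these two computations, each an essential and nontrivial ingredient of the proof.
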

\begin{remark}\label{remark after main theorem}
\begin{enumerate}
\item
$M$ and $D$ in Theorem \ref{main theorem} have \emph{no} odd-dimensional cohomology (see Lemma \ref{Morse function}). Thus the condition of $M\setminus D$ being a homology cell is equivalent to that the natural homomorphisms $H_{2k}(D;\mathbb{Z})\longrightarrow H_{2k}(M;\mathbb{Z})$ are all bijective for $0\leq k\leq n-1$.

\item
We suspect that the case $$\big(c_1(M),c_1(D)\big)=\big(\frac{1}{2}(n+1)x,
\frac{1}{2}(n-1)x_0\big)$$ may not happen even if $n\equiv 3 \pmod 4$. But our methods are not able to rule out its possibility (see the proof in Proposition \ref{first Chern class}).
\end{enumerate}
\end{remark}

\begin{definition}
A topological space $X$ is called \emph{homologically trivial} if the reduced homology $\widetilde{H}_i(X;\mathbb{Z})=0$ for all $i$, which is satisfied when $X$ is contractible.
\end{definition}
Indeed the homological triviality of $M\setminus D$ is enough to derive that it is a cohomology cell (see Lemma \ref{homo trivial implies homo cell}). Thus we have the following consequence, which is a circle action analogue to \cite[Thm 1.7]{LP}, where the latter is a solution to conjecture $A_n$ in \cite{Fu80} whenever $n \not \equiv 3 \pmod 4$.
\begin{corollary}\label{main corollary}
Let $M$ be a symplectic manifold admitting a Hamiltonian circle action with isolated fixed points. If $M$ contains an $S^1$-invariant symplectic hypersurface $D$ such that $M\setminus D$ is homologically trivial, the same conclusions as in Theorem \ref{main theorem} hold. In particular, this holds true when $M\setminus D$ is contractible.
\end{corollary}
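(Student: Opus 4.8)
The plan is to deduce Corollary \ref{main corollary} directly from Theorem \ref{main theorem}, the only point to check being that the hypothesis of homological triviality of $M\setminus D$ is already strong enough to force $M\setminus D$ to be a homology cell in the sense of Definition \ref{homology cell}; this is the substance of Lemma \ref{homo trivial implies homo cell}. Once that implication is in hand, all three conclusions of Theorem \ref{main theorem} transfer verbatim, and the final assertion follows at once, since a contractible space has vanishing reduced homology and is therefore homologically trivial.

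To establish the reduction I would argue by Poincar\'{e} duality. Because $D$ is a symplectic hypersurface of the symplectic manifold $M$, the submanifold $D$, its normal bundle $\nu$ (a complex line bundle), and $M$ itself all carry canonical orientations, so the Thom isomorphism $H^{j+2}(M,M\setminus D;\mathbb{Z})\cong H^{j}(D;\mathbb{Z})$ is available. Homological triviality of $M\setminus D$ gives, by the universal coefficient theorem (with no $\mathrm{Ext}$ contributions, as the homology is free and concentrated in degree zero), $\widetilde{H}^{\ast}(M\setminus D;\mathbb{Z})=0$. Feeding this into the long exact cohomology sequence of the pair $(M,M\setminus D)$ shows that for every $j\ge 0$ the natural map $H^{j+2}(M,M\setminus D;\mathbb{Z})\to H^{j+2}(M;\mathbb{Z})$ is an isomorphism, whence the Gysin (wrong-way) map $i_!\colon H^{j}(D;\mathbb{Z})\to H^{j+2}(M;\mathbb{Z})$ is an isomorphism for all $j\ge 0$. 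By the standard compatibility of $i_!$ with Poincar\'{e} duality on $M$ and $D$, the bijectivity of $i_!\colon H^{j}(D)\to H^{j+2}(M)$ for $0\le j\le 2(n-1)$ is equivalent to the bijectivity of the inclusion-induced maps $i_\ast\colon H_k(D;\mathbb{Z})\to H_k(M;\mathbb{Z})$ for $0\le k\le 2(n-1)$. This is precisely the homology cell condition of Definition \ref{homology cell}, so Theorem \ref{main theorem} applies.

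For the corollary itself there is essentially no obstacle beyond careful degree bookkeeping: the orientation data required for the Thom isomorphism are supplied for free by the symplectic structure on the pair $(M,D)$, and the passage from the cohomological Gysin map to the homological inclusion map is the routine compatibility of umkehr maps with Poincar\'{e} duality. Since homological triviality is (a priori) stronger than being a homology cell, no new geometric input — and in particular no further use of the Hamiltonian action or the congruence condition on $n$ — is needed here; the entire difficulty has already been absorbed into Theorem \ref{main theorem}. One only has to note that the trichotomy on $n \bmod 4$ in parts (2) and (3) is inherited unchanged, which is automatic because the reduction leaves the pair $\big(c_1(M),c_1(D)\big)$ untouched.
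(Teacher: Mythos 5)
Your proposal is correct, and its overall architecture matches the paper exactly: the corollary is reduced to Theorem \ref{main theorem} by showing that homological triviality of $M\setminus D$ forces the homology cell condition (this is precisely Lemma \ref{homo trivial implies homo cell} in the paper), and the contractible case follows since contractibility implies homological triviality. Where you differ is in the mechanism of that reduction. The paper argues on the pair $(M,D)$: it invokes the Poincar\'{e}--Alexander--Lefschetz duality theorem from Bredon, $H_k(M,D;\mathbb{Z})\cong H^{2n-k}(M\setminus D;\mathbb{Z})$, concludes $H_k(M,D;\mathbb{Z})=0$ for $0\le k\le 2n-1$, and reads off the bijectivity of $H_k(D;\mathbb{Z})\to H_k(M;\mathbb{Z})$ from the homology long exact sequence of $(M,D)$. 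You instead work with the pair $(M,M\setminus D)$: Thom isomorphism for the (canonically oriented, complex) normal bundle of $D$, the cohomology long exact sequence of $(M,M\setminus D)$, and the compatibility of the resulting Gysin map $i_!\colon H^{j}(D;\mathbb{Z})\to H^{j+2}(M;\mathbb{Z})$ with Poincar\'{e} duality on $M$ and $D$. The two routes are equivalent in substance --- your Thom-plus-excision argument is essentially a hands-on proof of the special case of Alexander--Lefschetz duality that the paper cites --- but the paper's version is shorter, needing only one duality statement and one long exact sequence, while yours trades the black-box citation for more elementary ingredients at the cost of the umkehr-map bookkeeping. One point in your favor: you make explicit the universal coefficient step converting vanishing reduced homology of $M\setminus D$ into vanishing reduced cohomology (no $\mathrm{Ext}$ terms since the homology is free and concentrated in degree zero), which the paper's proof uses silently when it writes $H^{2n-k}(M\setminus D;\mathbb{Z})=0$. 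Your closing observations --- that no further use of the Hamiltonian action is needed and that the case distinction on $n\bmod 4$ in parts (2) and (3) is inherited unchanged --- are accurate and consistent with how the paper handles the corollary in Section \ref{section5}.
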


The main ideas in the proof of Theorem \ref{main theorem} consist of two parts. The first part is to further develop some ideas in \cite{LP}, which focus on K\"{a}hler manifolds, to the setting of symplectic manifolds. The second part is to carefully verify that in our setting the technical assumptions in \cite{Ha85} be satisfied. To this end, a wide range of tools need to be employed to provide a proof.

The rest of this article is organized as follows. We briefly review some basic notation and facts on circle action needed in this article in Section \ref{section2}.
Some ideas in \cite{LP}, which focus on K\"{a}hler manifolds, shall be further developed in Section \ref{section3} to the setting of symplectic manifolds equipped with Hamiltonian circle action with isolated fixed points. Section \ref{section4} is then devoted to verifying that in our model the assumptions in \cite{Ha85} are satisfied. The whole proof of our main results will be finished in Section \ref{section5}.

\section{Preliminaries}\label{section2}
Some basic background materials will be recalled in this section for later use as well as for completeness.

\subsection{Almost-complex $S^1$-manifolds}\label{subsection2.1}
We assume in this subsection that $M=(M,J)$ be an almost-complex $S^1$-manifold whose fixed points are nonempty and isolated, and $M^{S^1}=\{P_1,\ldots,P_m\}$, where as is well-known the number $m$ of these isolated fixed points is exactly the Euler characteristic of $M$.

Let $T_{P_i}M=(T_{P_i}M,J)$ be the tangent spaces to $P_i$, which are $n$-dimensional complex vector spaces. The circle action on $M$ induces a linear action of $S^1$ on these $T_{P_i}M$, which means that these $T_{P_i}M$ can be viewed as complex $S^1$-modules. Since these fixed points $P_i$ are isolated, only zero vector in $T_{P_i}M$ can be fixed by the whole $S^1$. In other words, there exist \emph{nonzero} integers $k_1^{(i)},\ldots,k_n^{(i)}$ such that
$$T_{P_i}M=\sum_{j=1}^nt^{k_j^{(i)}}\in\mathbb{Z}[t,t^{-1}]=R(S^1),\quad 1\leq i\leq m,$$
and these $k_1^{(i)},\ldots,k_n^{(i)}$ are called the \emph{weights} of this circle action at $P_i$.

If $D$ is an $S^1$-invariant almost-complex submanifold on $M$, then $D^{S^1}\subset M^{S^1}$. If $D^{S^1}$ is nonempty and some $P_i\in D^{S^1}$, then $T_{P_i}D$ is a complex $S^1$-submodule of $T_{P_i}M$ and the weights at $P_i$ of the restricted circle action on $D$ are those $k_j^{(i)}$ whose one-dimensional submodules span $T_{P_i}D$.

The residue formula of Bott (\cite{Bo2}), in the case of almost-complex $S^1$-manifolds, allows us to calculate Chern numbers of $M$ in terms of the weights $k_j^{(i)}$ around the fixed points $P_i$. In our later proof only the formula for the Chern number $c_1^n[M]$ is needed, and so we record it below for later reference.
\begin{lemma}\label{Bott residue formula}
The residue formula for the Chern number $c_1^n[M]$ reads as follows \rm(see \cite[\S2.1]{LL}\rm).
\begin{eqnarray}\label{Bott residue formula0}
\sum_{i=1}^{m}\frac{(\sum_{j=1}^mk_j^{(i)})^r}
{\prod_{j=1}^{n}k_j^{(i)}}=\left\{ \begin{array}{ll}
0,\quad& 0\leq r\leq n-1,\\
~\\
c_1^n[M],\quad& r=n.
\end{array} \right.
\end{eqnarray}
\end{lemma}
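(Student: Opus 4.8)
The plan is to deduce this from the Atiyah--Bott--Berline--Vergne localization theorem applied to powers of the \emph{equivariant} first Chern class. I would work in the $S^1$-equivariant cohomology ring $H^*_{S^1}(M;\mathbb{Q})$, which is an algebra over $H^*_{S^1}(\mathrm{pt};\mathbb{Q})=\mathbb{Q}[u]$ with $\deg u=2$. Let $c_1^{S^1}\in H^2_{S^1}(M)$ denote the equivariant first Chern class of $TM$. Since each $P_i$ is an isolated fixed point, its normal bundle is all of $T_{P_i}M$, whose equivariant Euler class is $e^{S^1}(T_{P_i}M)=u^n\prod_{j=1}^n k_j^{(i)}$, while the restriction of $c_1^{S^1}$ to $P_i$ is $c_1^{S^1}|_{P_i}=u\sum_{j=1}^n k_j^{(i)}$; both follow by decomposing $T_{P_i}M$ into its one-dimensional weight spaces $t^{k_j^{(i)}}$.

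First I would apply the localization formula to $\alpha=(c_1^{S^1})^r\in H^{2r}_{S^1}(M)$ for each $r\ge 0$. This yields the identity
$$
\int_M (c_1^{S^1})^r
=\sum_{i=1}^m \frac{(c_1^{S^1}|_{P_i})^r}{e^{S^1}(T_{P_i}M)}
=u^{\,r-n}\sum_{i=1}^m \frac{\big(\sum_{j=1}^n k_j^{(i)}\big)^r}{\prod_{j=1}^n k_j^{(i)}},
$$
valid in the ring obtained from $\mathbb{Q}[u]$ by inverting $u$. The left-hand side, however, lies in the genuine (unlocalized) group $H^{2(r-n)}_{S^1}(\mathrm{pt})$, since equivariant integration lowers degree by $2n=\dim_{\mathbb{R}} M$.

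The two cases then follow from a degree count. For $0\le r\le n-1$ the target $H^{2(r-n)}_{S^1}(\mathrm{pt})$ sits in negative degree and hence vanishes, so the displayed expression is $0$; as $u^{\,r-n}$ is a unit in the localized ring, the weight sum $\sum_i \big(\sum_j k_j^{(i)}\big)^r/\prod_j k_j^{(i)}$ must itself vanish, which is the first assertion. For $r=n$ the power of $u$ is zero and the left-hand side lies in $H^0_{S^1}(\mathrm{pt})=\mathbb{Q}$; specializing to ordinary cohomology (the map $u\mapsto 0$, under which $c_1^{S^1}\mapsto c_1$) identifies this constant with $\int_M c_1^n=c_1^n[M]$, giving the second assertion.

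The evaluation of the equivariant Euler classes and restrictions at the fixed points is routine. The one point that requires care --- and the step I would flag as the main obstacle --- is the bookkeeping in the localized ring: one must argue rigorously that the a priori meromorphic (in $u$) right-hand side is in fact the image of an honest polynomial class, so that the negative-degree terms are forced to cancel for $r<n$ and the $r=n$ term legitimately recovers the integer Chern number rather than some $u$-dependent expression. I would also emphasize that, although the statement is phrased for almost-complex $S^1$-manifolds, the localization theorem is purely topological and requires no integrability or symplectic hypothesis, so nothing beyond the weight data $\{k_j^{(i)}\}$ enters.
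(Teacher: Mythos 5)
Your argument is correct, but it is a genuinely different route from the paper's: the paper offers no proof of this lemma at all, quoting it as a known instance of Bott's residue formula with a pointer to \cite[\S2.1]{LL}, where the displayed identities arise as the special case $\phi=c_1^r$ ($0\leq r\leq n$) of Bott's theorem \cite{Bo2} on vector fields and characteristic numbers (Bott's original derivation is via Chern--Weil theory for a vector field with nondegenerate zeros). You instead rederive the identity from scratch by Atiyah--Bott--Berline--Vergne localization in $H^{*}_{S^1}(M;\mathbb{Q})$, and the derivation is sound: the restrictions $c_1^{S^1}|_{P_i}=u\sum_j k_j^{(i)}$ and $e^{S^1}(T_{P_i}M)=u^n\prod_j k_j^{(i)}$ are as you say, and your degree count does all the work. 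In fact, the ``main obstacle'' you flag at the end is already dispatched by your own argument and needs no extra bookkeeping: the left-hand side $\int_M (c_1^{S^1})^r$ lives in the honest ring $H^{*}_{S^1}(\mathrm{pt};\mathbb{Q})\cong\mathbb{Q}[u]$, which is zero in negative degrees, so the identity $u^{\,r-n}S_r=\int_M (c_1^{S^1})^r$ in $\mathbb{Q}[u,u^{-1}]$ forces $S_r=0$ for $r<n$, while for $r=n$ the integral is a constant whose specialization $u\mapsto 0$ is $c_1^n[M]$; there is no genuine meromorphy issue to resolve. What each approach buys: citing Bott keeps the paper short and matches the form actually used later (the restrictions for $r\leq n-1$ as well as the evaluation at $r=n$, exactly as packaged in \cite{LL}), while your ABBV proof is self-contained, purely topological, and makes transparent why no integrability or symplectic hypothesis is needed --- only the weight data at the isolated fixed points. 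One incidental merit of your write-up: you silently corrected the typo in the paper's display, where the inner sum $\sum_{j=1}^{m}k_j^{(i)}$ should read $\sum_{j=1}^{n}k_j^{(i)}$.
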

\begin{remark}
Note that the formula (\ref{Bott residue formula0}) \emph{not only} provides a method for calculating the Chern number $c_1^n[M]$ ($r=n$), but also gives many restrictions to the weights $k_j^{(i)}$ ($0\leq r\leq n-1$). For some related applications of these restrictions to $S^1$-manifolds we refer the interested reader to \cite{LL} and \cite{Li12b}, and the references therein.
\end{remark}

\subsection{Admissible complex line bundles}\label{subsection2.2}
\begin{definition}\label{admissible}
A (complex) line bundle $L$ over an almost-complex $S^1$-manifold $M$ is called \emph{admissible} if the $S^1$-action on $M$ can be lifted to $L$ making it an $S^1$-bundle.
\end{definition}
It turns out that (\cite{HY}) $L$ is admissible if and only if its first Chern class $c_1(L)$ lies in the image of the natural homomorphism $$H^2_{S^1}(M;\mathbb{Z})\longrightarrow H^2(M;\mathbb{Z}),$$
where $H^2_{S^1}(M;\mathbb{Z})$ is the equivariant cohomology group. A simple but useful fact is that \emph{any} line bundle on an $S^1$-manifold with vanishing first Betti number is admissible (see \cite[Lemma 3.1]{Li12b}).

If $L$ is an admissible line bundle over an almost-complex $S^1$-manifold $M$ with isolated fixed points $P_i$, we may fix a lifting of this action to it. Then $L_{P_i}$, the fibers of $L$ at $P_i$, are complex one-dimensional $S^1$-modules and so there exist integers $a_i$ such that
$$L_{P_i}=t^{a_i}\in\mathbb{Z}[t,t^{-1}]=R(S^1).$$
These integers $a_i$ are called \emph{weights} of $L$ at the fixed points $P_i$ with respect to this given lifting. However, if we choose \emph{another} lifting of this action to $L$, the weights at $P_i$ are changed simultaneously to $a_i+a$ for some integer $a$. So the weights $\{a_i\}$ of an admissible $L$ are only determined up to a simultaneous integer. In particular, the integers $a_i-a_j$ make sense.

\subsection{Hamiltonian circle action}\label{subsection2.3}
In this subsection we assume that $M=(M,\omega)$ be a symplectic manifold.

A circle action on $M$ is called \emph{symplectic} if it preserves the symplectic form $\omega$. If a circle acts on $(M,\omega)$ symplectically, it is well-known that we can always find an almost-complex structure both compatible with $\omega$ and preserved by this circle action. So the notions in Section \ref{subsection2.1} are applicable to the setting of symplectic circle actions \emph{without} explicitly mentioning this compatible almost-complex structure.

Let $X$ be the generating vector field of a circle action on $(M,\omega)$. A circle acts on $(M,\omega)$ symplectically if and only if the one form $\omega(X,\cdot)$ is closed, which is due to the Cartan formula. A symplectic circle action is called \emph{Hamiltonian} if the one form $\omega(X,\cdot)$ is exact, i.e., $\omega(X,\cdot)=\text{d}f$ for some smooth function $f$ on $M$. This $f$ is called the \emph{moment map} of this Hamiltonian circle action, which is unique up to an additive constant. Note that the fixed-point set of a Hamiltonian circle action is exactly the critical-point set of the moment map $f$, and hence is always nonempty as the points minimizing or maximizing $f$ are always critical. The linear action (\ref{linear action}) on $(\mathbb{P}^n,\omega_{0})$ is a Hamiltonian circle action with respect to the symplectic form $\omega_{0}$ induced from the Fubini-Study metric.

Let $(M,\omega)$ admit a Hamiltonian circle action whose generating vector field and moment map are $X$ and $f$, and $D$ its $S^1$-invariant symplectic submanifold. Then the generating vector field of this $S^1$-manifold $D$ is $X|_{D}$ and the restriction of $\omega(X,\cdot)=\text{d}f$ to $D$ implies that the restricted circle action on $D$ is also Hamiltonian with the moment map $f|_{D}$. Moreover, we have $D^{S^1}\subset M^{S^1}$. The hyperplane $\mathbb{P}^{n-1}$ in (\ref{hyperplane}) is an $S^1$-invariant symplectic hypersurface of $(\mathbb{P}^n,\omega_0)$.

For later reference, some well-known facts concerning Hamiltonian
circle action with isolated fixed points are collected in the following lemma.
\begin{lemma}\label{Morse function}
Let $(M,\omega)$ be a symplectic manifold equipped with a Hamiltonian circle action with isolated fixed points. Then we have the following facts.
\begin{enumerate}
\item
$M$ is simply-connected, and the homology $H_{\ast}(M;\mathbb{Z})$ is torsion-free and has no (nontrivial) odd-dimensional elements.

\item
Each even-dimensional Betti number $b_{2p}(M)$ \rm($0\leq p\leq n$\rm) is equal to the number of isolated fixed points whose weights have exactly $p$ negative integers.
\end{enumerate}
In particular, if $D$ is an $S^1$-invariant symplectic submanifold of such $(M,\omega)$, these conclusions also hold true for $D$.
\end{lemma}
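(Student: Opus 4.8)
The plan is to exhibit the moment map $f$ as a Morse function and extract both statements from its critical-point data. As recalled in Section~\ref{subsection2.3}, the Hamiltonian hypothesis identifies the critical-point set of $f$ with the fixed-point set $M^{S^1}$, which is finite. First I would fix an $S^1$-invariant $\omega$-compatible almost-complex structure (available by the discussion in Section~\ref{subsection2.3}) and apply the equivariant Darboux normal form near each $P_i$: in suitable complex coordinates $(z_1,\dots,z_n)$ the action is linear with weights $k_1^{(i)},\dots,k_n^{(i)}$ and the moment map reads
$$f = f(P_i) + \tfrac12\sum_{j=1}^{n} k_j^{(i)}\,|z_j|^2$$
up to the additive constant and the fixed sign convention. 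Since the $P_i$ are isolated all weights are nonzero, so the Hessian is nondegenerate and $f$ is a genuine Morse function; moreover its Morse index at $P_i$ equals $2\cdot\#\{j : k_j^{(i)}<0\}$, twice the number of negative weights, and is therefore even.

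The whole of the lemma then rests on this even-index property. Because no two critical points have indices differing by $1$, the function $f$ is lacunary and hence a perfect Morse function: the associated Morse chain complex over $\mathbb{Z}$ has all differentials equal to zero for degree-parity reasons, so $H_k(M;\mathbb{Z})$ is free abelian of rank equal to the number of index-$k$ critical points and vanishes for odd $k$. This at once yields the torsion-freeness and the absence of odd-dimensional homology in part (1) and, comparing ranks degree by degree, gives part (2): $b_{2p}(M)$ equals the number of fixed points with exactly $p$ negative weights. For simple-connectedness I would use that the perfect Morse function has $b_0(M)=1$ critical points of index $0$ and no critical points of index $1$; the resulting handle/CW decomposition with a single $0$-cell and no $1$-cells forces $\pi_1(M)=0$.

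For the submanifold $D$, the text preceding the lemma already records that the restricted action is Hamiltonian with moment map $f|_D$ and that $D^{S^1}\subset M^{S^1}$ is finite. Hence $D$ meets the identical hypotheses, and the same argument applies verbatim.

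The step I expect to be the main obstacle is the local analysis underlying the index count: one must justify through the equivariant Darboux theorem that, near each fixed point, $f$ is genuinely the weighted sum $\tfrac12\sum_j k_j^{(i)}|z_j|^2$, and that the sign of each weight matches the sign of the corresponding Hessian eigenvalue, so that the index is unambiguously twice the number of negative weights in the convention of part (2). Once the even-index property is secured, everything downstream — perfectness, torsion-freeness, the Betti-number count, and simple-connectedness — follows formally from the lacunary principle.
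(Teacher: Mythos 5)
Your proposal is correct and follows essentially the same route as the paper, which simply cites classical Morse theory (Frankel for the K\"ahler case, Kirwan in general): the moment map is a perfect Morse function whose critical points are the fixed points, with Morse index twice the number of negative weights, so all indices are even and the lacunary principle yields parts (1) and (2). You merely spell out the details the paper delegates to the references --- the equivariant Darboux normal form $f = f(P_i) + \tfrac12\sum_j k_j^{(i)}|z_j|^2$ and the CW argument for simple-connectedness --- and these details are handled correctly.
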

\begin{proof}
These are consequences of the classical Morse theory. Indeed,
the moment map $f:M\rightarrow\mathbb{R}$ of this Hamiltonian circle action is a \emph{perfect Morse function} whose critical points are precisely the isolated fixed points of this circle action. Moreover, the Morse index of each point is \emph{twice} the number of the negative weights at it. When the manifold in question is K\"{a}hler, these results are due to Frankel (\cite{Fr59}) building on Bott's work \cite{Bo}. For general symplectic manifolds, these facts can be found in \cite{Ki}.
\end{proof}

\subsection{A circle action analogue to Kobayashi-Ochiai's theorem}\label{subsection2.4}
Let $M=(M,J)$ be an almost-complex $S^1$-manifold with $M^{S^1}=\{P_1,\ldots,P_m\}$, and the symbols and notions introduced in Section \ref{subsection2.1} are kept using.

The following notion of quasi-ampleness for line bundles is due to Hattori, which is treated in \cite{Ha85} as a circle action analogue to ample line bundles in algebraic geometry.
\begin{definition}\label{def of quasi-ample}
Let $L$ be an admissible line bundle over $M$ with weights $a_i$ at $P_i$ under some lifting. $L$ is called \emph{quasi-ample} if the weights $a_1,\ldots,a_m$ are pairwise distinct and $c_1^n(L)[M]\neq0$, where $[M]$ is the fundamental class of $M$.
\end{definition}\label{quasi-ample}
\begin{remark}\label{remark quasi-ample}
\begin{enumerate}
\item
As mentioned at the end of Section \ref{subsection2.2}, this definition is independent of the choice of the lifting as another lifting makes the weights $a_i$ simultaneously to $a_i+a$ for some integer $a$.

\item
By definition $L$ is quasi-ample if and only if $L^{-1}$ is, which is crucially different from the fact in algebraic geometry.
\end{enumerate}
\end{remark}

This notion can be simply illustrated by the linear action on $\mathbb{P}^n$ mentioned in the Introduction.
\begin{example}\label{example of H}
Let $H$ be the hyperplane bundle on $\mathbb{P}^n$.
The total space of the tautological bundle $H^{-1}$ is given by
 $$\big\{([z_1:\cdots:z_{n+1}],v)~\big|~v\in\mathbb{C}
 (z_1,\ldots,z_{n+1})\subset\mathbb{C}^{n+1}\big\}.$$
The circle action given by (\ref{linear action}) can be lifted to the tautological bundle via $$z\cdot\big([z_1:\cdots:z_{n+1}],v\big):=\big([z^{-a_1}z_1:
\cdots:z^{-a_{n+1}}z_{n+1}],v\big).$$
whose weights at the fixed points are $-a_i$. Thus the weights of this induced lift on $H$ are $a_i$. Moreover, $c_1^n(H)[\mathbb{P}^n]=1$ and $c_1^n(H^{-1})[\mathbb{P}^n]=(-1)^n$. So $H$ as well as $H^{-1}$ is quasi-ample.
\end{example}

The following notion is also due to Hattori (\cite[p.447]{Ha85}).
\begin{definition}\label{condition C}
We say an admissible line bundle $L$ over $M$ satisfies the \emph{condition $C$} if there exist integers $k_0\in\mathbb{Z}_{\geq0}$ and $a\in\mathbb{Z}$ such that
\be\label{condition C formula}
\sum_{j=1}^nk_{j}^{(i)}=k_0a_i+a,\quad\forall ~1\leq i\leq m,\ee
where $k_{j}^{(i)}$ are the weights of the circle action at $P_i$ and $a_i$ the weights of $L$ with respect to some lifting of this circle action.
\end{definition}
\begin{remark}
\begin{enumerate}
\item
With the same reason as in Remark \ref{remark quasi-ample}, this definition is independent of the choice of the lifting.

\item
The condition (\ref{condition C formula}) is called ``Condition $D$" by Hattori in \cite{Ha85}. Here we call it Condition $C$ as the letter ``$D$" has been reserved to denote a hypersurface in $M$.
\end{enumerate}
\end{remark}

With these notions understood, the following are the main results in \cite{Ha85} (\cite[Thms 5.1, 5.7]{Ha85}), which are Hattori's solution to the circle action analogue to the Kobayashi-Ochiai's theorem.
\begin{theorem}[Hattori]\label{Hattori}
Let $M$ be an almost-complex $S^1$-manifolds with $M^{S^1}=\{P_1,\ldots,P_m\}$, and $L$ a quasi-ample line bundle over it satisfying the Condition $C$ in (\ref{condition C formula}) for some $k_0\in\mathbb{Z}_{\geq0}$ and $a\in\mathbb{Z}$. Then $k_0\leq n+1\leq m$. If moreover $k_0=n+1=m$, then $M$ is unitary cobordant to $\mathbb{P}^n$, $c_1^n(L)[M]=1$, and $$\big\{k_1^{(i)},\ldots,k_n^{(i)}\big\}=
\big\{a_i-a_j~|~j\neq i,~1\leq j\leq n+1\big\},\quad\forall~1\leq i\leq n+1.$$
In other words, the $S^1$-representations at these $P_i$ are the same as (\ref{linear representation}).
\end{theorem}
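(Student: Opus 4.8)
The plan is to reduce the entire statement to the Atiyah-Bott-Berline-Vergne localization formula applied to powers of the equivariant first Chern class of $L$. The starting point is the observation that $\sum_{j=1}^n k_j^{(i)}$ is exactly the weight at $P_i$ of the equivariant class $c_1^{S^1}(TM)$, while $a_i$ is the weight of $c_1^{S^1}(L)$. Since $M$ carries no odd-degree cohomology, the restriction $H^2_{S^1}(M;\mathbb{Z})\to\bigoplus_i H^2_{S^1}(P_i;\mathbb{Z})$ is injective, so Condition $C$ upgrades to the equivariant identity $c_1^{S^1}(TM)=k_0\,c_1^{S^1}(L)+a\,t$, whence $c_1(TM)=k_0 c_1(L)$ in $H^2(M;\mathbb{Z})$, the precise equivariant analogue of the Kobayashi-Ochiai hypothesis $-K_M=k_0 L$. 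Writing $w_i:=1/\prod_{j}k_j^{(i)}\neq 0$, localization of $c_1^{S^1}(L)^r$ yields the moment identities $\sum_i w_i a_i^{\,r}=0$ for $0\le r\le n-1$ and $\sum_i w_i a_i^{\,n}=c_1^n(L)[M]\neq 0$, the $L$-version of Lemma \ref{Bott residue formula}.

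From these identities several conclusions come cheaply. Because the $a_i$ are pairwise distinct by quasi-ampleness, if $m\le n$ the relations for $0\le r\le m-1$ form a square Vandermonde system with nonzero right kernel vector $(w_i)$, which is impossible; hence $m\ge n+1$. Substituting Condition $C$ into Lemma \ref{Bott residue formula} and discarding the lower-order terms, which vanish by the moment identities, also gives $c_1^n[M]=k_0^{\,n}c_1^n(L)[M]$. The genuinely hard inequality is $k_0\le n+1$, for which Vandermonde bookkeeping no longer suffices. Here I would invoke the Atiyah-Bott holomorphic Lefschetz fixed-point formula to compute the equivariant Euler characteristics $\chi^{S^1}(M,L^{p})(t)=\sum_i \frac{t^{p a_i}}{\prod_j(1-t^{-k_j^{(i)}})}$, each of which is a priori a genuine Laurent polynomial in $t$. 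Playing the relation $c_1(TM)=k_0 c_1(L)$, equivalently $K_M\simeq L^{-k_0}$ equivariantly up to the shift, against equivariant Serre duality and against degree and vanishing bounds for these characters as $p$ runs through $0,\dots,k_0$ should force $k_0\le n+1$, in direct parallel with the way Kodaira vanishing and Riemann-Roch enter the Kobayashi-Ochiai argument. I expect this index-theoretic step to be the main obstacle, precisely because the almost-complex category supplies no curvature or vanishing theorem, so one must instead exploit the integrality and polynomiality of the fixed-point characters by hand.

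It remains to treat the extremal case $k_0=n+1=m$ and recover the representations. With exactly $n+1$ distinct nodes the $n$ moment relations cut out a one-dimensional solution space, so Lagrange interpolation gives $w_i=c/\prod_{l\neq i}(a_i-a_l)$, and the classical symmetric-function identity $\sum_i a_i^{\,n}/\prod_{l\neq i}(a_i-a_l)=1$ identifies the constant as $c=c_1^n(L)[M]$; equivalently $\prod_j k_j^{(i)}=\tfrac1c\prod_{l\neq i}(a_i-a_l)$, while Condition $C$ gives $\sum_j k_j^{(i)}=\sum_{l\neq i}(a_i-a_l)$ after the lift is normalized so that $a=-\sum_l a_l$. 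Matching the product and the sum of the weights already forces the linear model for small $n$, but to obtain the full multiset equality $\{k_1^{(i)},\dots,k_n^{(i)}\}=\{a_i-a_l\mid l\neq i\}$ in general I would localize every equivariant Chern class $c_p^{S^1}(TM)$, not merely $c_1$: since $c_1(M)=(n+1)c_1(L)$ makes $M$ a cohomology $\mathbb{P}^n$ with $c_1(L)$ a generator, all elementary symmetric functions of the weights at $P_i$ become determined and coincide with those of (\ref{linear representation}), which simultaneously yields $c=c_1^n(L)[M]=1$. Finally the unitary cobordism $M\sim\mathbb{P}^n$ follows formally, since the equivariant unitary cobordism class of an $S^1$-manifold with only isolated fixed points is determined by its tangential representations at the fixed points, and these now agree with those of $\mathbb{P}^n$.
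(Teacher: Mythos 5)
First, note that the paper does not prove this statement at all: it is quoted verbatim from Hattori (\cite[Thms 5.1, 5.7]{Ha85}), so there is no internal proof to match; your attempt has to be judged against Hattori's actual argument, which occupies a substantial part of \cite{Ha85} and runs through a delicate analysis of the equivariant characters $\chi(M,L^p)(t)$ produced by the fixed-point formula. Your easy steps are fine and are indeed in the spirit of that method: the moment identities $\sum_i w_i a_i^r=0$ ($0\leq r\leq n-1$), $\sum_i w_i a_i^n=c_1^n(L)[M]$ follow from ABBV localization (no equivariant formality needed for this), the Vandermonde argument for $m\geq n+1$ is correct, the Lagrange-interpolation formula $w_i=c/\prod_{l\neq i}(a_i-a_l)$ in the extremal case is correct, and the closing step (weights determine all Chern numbers via Bott's residue formula, hence the unitary cobordism class) is sound \emph{once} the weights are identified. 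But the central inequality $k_0\leq n+1$ is nowhere proved: your paragraph on it consists of ``should force'' and ``I expect,'' and this is precisely where the real content of Hattori's theorem lives. As it stands the proposal establishes only $n+1\leq m$, not $k_0\leq n+1$.

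The extremal case also contains two steps that fail as written. First, you cannot ``normalize the lift so that $a=-\sum_l a_l$'': under a change of lift $a_i\mapsto a_i+s$ one has $a\mapsto a-(n+1)s$ and $\sum_{l=1}^{n+1}a_l\mapsto \sum_l a_l+(n+1)s$, so the quantity $a+\sum_{l=1}^{n+1}a_l$ is \emph{invariant}; the identity $\sum_j k_j^{(i)}=\sum_{l\neq i}(a_i-a_l)$ is therefore a nontrivial assertion, equivalent to part of the conclusion, and must be derived rather than imposed. Second, your route to the multiset equality imports hypotheses that Hattori's theorem does not have: the opening claim ``$M$ carries no odd-degree cohomology'' is a feature of the Hamiltonian setting of Theorem \ref{main theorem}, not of the arbitrary almost-complex $S^1$-manifold here, so the injectivity of $H^2_{S^1}(M)\to\bigoplus_i H^2_{S^1}(P_i)$, the equivariant upgrade of Condition $C$, the identity $c_1(TM)=k_0c_1(L)$ in $H^2(M;\mathbb{Z})$, and a fortiori the assertion that $M$ is a cohomology $\mathbb{P}^n$, are all unjustified. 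Moreover, even granting them, localizing $c_p^{S^1}(TM)$ yields only \emph{global} identities $\sum_i \sigma_p\big(k^{(i)}\big)\lambda_i^{r}/e_i=\cdots$, never the pointwise values $\sigma_p\big(k^{(i)}\big)$ at each $P_i$; and knowing only the sum and product of the $n$ weights at $P_i$ does not determine the multiset for $n\geq 3$. Consequently both the identification $\big\{k_1^{(i)},\ldots,k_n^{(i)}\big\}=\{a_i-a_j\,|\,j\neq i\}$ and the normalization $c=c_1^n(L)[M]=1$ remain unproven (the divisibility $c\mid\prod_{l\neq i}(a_i-a_l)$ you do get is far from forcing $c=1$). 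In short: the skeleton is the right one, but the two load-bearing steps --- the bound $k_0\leq n+1$ and the pointwise identification of the tangent representations --- are exactly the ones missing, and they are the theorem.
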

\begin{remark}
Hattori conjectured that the assumptions involved in Theorem \ref{Hattori} can be further weakened and some precise conjectures are formulated in \cite{Ha86}, which, to the author's best knowledge, are widely open.
\end{remark}

\section{Homotopy complex projective spaces with circle actions}\label{section3}
The main purpose of this section is to develop, in parallel, some ideas in \cite{LP} to the symplectic setting, which culminates in Proposition \ref{first Chern class}.

\subsection{Homotopy complex projective spaces}
First we explain in this subsection that the pair $M$ and $D$ in Theorem \ref{main theorem} are both homotopy complex projective spaces, and the homological triviality of $M\setminus D$ implies that it is a homology cell. Their proofs are essentially scattered in \cite{LP}.
\begin{lemma}\label{homotopy Pn}
The manifolds $M$ and $D$ in Theorem \ref{main theorem} are both homotopy complex projective spaces.
\end{lemma}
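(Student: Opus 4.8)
The plan is to establish that $M$ and $D$ are each homotopy equivalent to $\mathbb{P}^n$ and $\mathbb{P}^{n-1}$ respectively by verifying the classical cohomological characterization of homotopy complex projective spaces: a simply-connected closed manifold of real dimension $2n$ is a homotopy $\mathbb{P}^n$ if and only if it has the same integral cohomology ring as $\mathbb{P}^n$, namely $H^*(M;\mathbb{Z})\cong\mathbb{Z}[x]/(x^{n+1})$ with $\deg x=2$. Since a homotopy equivalence is detected on cohomology once one produces a degree-one map realizing the ring isomorphism, the real content is to pin down the cohomology ring.

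\emph{First} I would invoke Lemma \ref{Morse function}: because $(M,\omega)$ carries a Hamiltonian circle action with isolated fixed points, $M$ is simply-connected and $H_*(M;\mathbb{Z})$ is torsion-free and concentrated in even degrees, with each even Betti number $b_{2p}(M)$ counting the fixed points of index $2p$. The same conclusions apply to the $S^1$-invariant symplectic hypersurface $D$. \emph{Next} I would feed in the homology-cell hypothesis: by Definition \ref{homology cell} the inclusion $D\hookrightarrow M$ induces isomorphisms $H_k(D;\mathbb{Z})\to H_k(M;\mathbb{Z})$ for $0\le k\le 2(n-1)$, and dually (Remark \ref{definition remark}) isomorphisms $H^k(M;\mathbb{Z})\to H^k(D;\mathbb{Z})$ in the same range. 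Combining this with the torsion-freeness and even-concentration of both $M$ and $D$, I would run the relative cohomology long exact sequence of the pair $(M,D)$ together with the Thom isomorphism for the normal bundle of the hypersurface $D$ (equivalently, the Gysin sequence of the circle bundle), to force each Betti number $b_{2p}(M)=1$ for $0\le p\le n$. The vanishing of odd cohomology makes the relevant connecting maps behave as in the $\mathbb{P}^n$ model, so the Poincar\'e polynomial of $M$ is exactly $1+x+\cdots+x^n$, and likewise $D$ has the Poincar\'e polynomial of $\mathbb{P}^{n-1}$.

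\emph{Then}, to upgrade the additive structure to the ring structure, I would use Poincar\'e duality on $M$. Let $x$ be a generator of $H^2(M;\mathbb{Z})\cong\mathbb{Z}$. Poincar\'e duality guarantees a perfect pairing $H^{2p}(M)\times H^{2(n-p)}(M)\to\mathbb{Z}$, and since each of these groups is infinite cyclic, the product $x^p$ must generate $H^{2p}(M;\mathbb{Z})$ for every $p$; in particular $x^n$ is a generator of $H^{2n}(M;\mathbb{Z})\cong\mathbb{Z}$, so $x^n[M]=\pm1$ and we may normalize $x$ so that $x^n[M]=1$. This yields $H^*(M;\mathbb{Z})\cong\mathbb{Z}[x]/(x^{n+1})$, the cohomology ring of $\mathbb{P}^n$, and the analogous argument on $D$ (with generator $x_0$ and $x_0^{n-1}[D]=1$) gives the ring of $\mathbb{P}^{n-1}$; the restriction isomorphism from Remark \ref{definition remark} shows $x|_D=x_0$, fixing the normalizations compatibly. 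Finally a standard obstruction-theoretic or minimal-model argument (simply-connected spaces with the cohomology ring of $\mathbb{P}^n$ are homotopy equivalent to it) completes the identification as homotopy complex projective spaces.

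\emph{The hardest step} I expect is not the ring computation but ensuring that all Betti numbers are forced to be exactly $1$ rather than merely nonzero: the homology-cell condition only controls degrees up to $2(n-1)$, so the top two cohomology groups $H^{2n-2}(M)$, $H^{2n}(M)$ and the interaction with $D$ through the Gysin/Thom sequence must be handled with care to rule out larger Betti numbers. This is precisely where the even-concentration of the cohomology (no odd classes to absorb connecting maps) and Poincar\'e duality must be combined, and where I would lean most heavily on the fixed-point count from Lemma \ref{Morse function} to cross-check that the total Euler characteristic equals $n+1$, matching $\mathbb{P}^n$. Since the author notes these proofs are ``essentially scattered in \cite{LP},'' I would expect the actual argument to cite the corresponding cohomological steps there and adapt them verbatim to the present symplectic hypotheses.
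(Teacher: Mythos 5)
Your overall skeleton is the same as the paper's: Lemma \ref{Morse function} supplies $b_1(M)=0$, simple connectivity, torsion-freeness and even-concentration; the homology-cell hypothesis then forces $M$ and $D$ to be cohomology complex projective spaces; and finally the standard fact that a simply-connected cohomology $\mathbb{P}^n$ is a homotopy $\mathbb{P}^n$ (the paper cites \cite[Prop.~4.1]{LP} for this, and \cite[Lemma 2.1]{LP}, going back to Sommese and Fujita, for the cohomology-ring step). Your additive argument is fine: combining $b_k(M)=b_k(D)$ for $k\le 2n-2$ with Poincar\'e duality on both $M$ and $D$ gives $b_k(M)=b_{k+2}(M)$ in that range, whence all even Betti numbers equal $1$ and the Euler characteristic is $n+1$.

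However, your step upgrading the additive structure to the ring structure contains a genuine gap. You assert that since the Poincar\'e duality pairing $H^{2p}(M)\times H^{2(n-p)}(M)\to\mathbb{Z}$ is perfect and all these groups are infinite cyclic, the power $x^p$ of a generator $x\in H^2$ must generate $H^{2p}(M;\mathbb{Z})$. This inference is false: the odd-dimensional quadric $Q_3\subset\mathbb{P}^4$ is simply-connected, has torsion-free cohomology with Betti numbers $1,0,1,0,1,0,1$ and a perfect duality pairing, yet for the generator $x\in H^2(Q_3)$ one has $x^2=2\alpha$ where $\alpha$ generates $H^4(Q_3)$; unimodularity is achieved by the pairing $x\cdot\alpha=1$, not by $x^p$ generating. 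Since $Q_3$ even carries Hamiltonian circle actions with isolated fixed points, nothing in the circle-action hypotheses rescues the inference --- the homology-cell hypothesis relative to $D$ \emph{must} enter the ring computation, and your argument never uses it there. The correct route (this is the content of \cite[Lemma 2.1]{LP}, which the paper cites rather than reproves) is: cup product with the class $x$ Poincar\'e dual to $[D]$ factors as $i_*\circ i^*$, where $i^*\colon H^k(M)\to H^k(D)$ is an isomorphism for $k\le 2n-2$ by the homology-cell hypothesis (Remark \ref{definition remark}), and the Gysin pushforward $i_*\colon H^k(D)\to H^{k+2}(M)$ is an isomorphism because it is Poincar\'e dual to the hypothesized homology isomorphism $H_{2n-2-k}(D)\to H_{2n-2-k}(M)$. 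Hence $\cup\, x\colon H^k(M)\to H^{k+2}(M)$ is an isomorphism for $0\le k\le 2n-2$, which is what forces $x^p$ to generate $H^{2p}(M;\mathbb{Z})$ and yields $H^*(M;\mathbb{Z})\cong\mathbb{Z}[x]/(x^{n+1})$; the argument for $D$ is analogous. With that repair, the remainder of your proposal (the passage from cohomology $\mathbb{P}^n$ to homotopy $\mathbb{P}^n$ via a map to $K(\mathbb{Z},2)$ compressed to the $2n$-skeleton and Whitehead's theorem) is sound and matches the paper.
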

\begin{proof}
It can be shown that if $M\setminus D$ is a homology cell with the first Betti number $b_1(M)=0$, then the integral cohomology rings of $M$ and $D$ are the same as those of $\mathbb{P}^n$ and $\mathbb{P}^{n-1}$ respectively. Such manifolds are usually called \emph{cohomology complex projective spaces}. This fact is (implicitly) due to Sommese (\cite[p.64-65]{So}) and Fujita (\cite[Thm 2]{Fu80}). A detailed proof can be found in \cite[Lemma 2.1]{LP}. Lemma \ref{Morse function} tells us that $b_1(M)=0$ and hence $M$ and $D$ in Theorem \ref{main theorem} are cohomology complex projective spaces. Moreover, it is well-known that a \emph{simply-connected} cohomology $\mathbb{P}^n$ must be a homotopy $\mathbb{P}^n$ (\cite[Prop.4.1]{LP}). Thus this lemma follows, still from Lemma \ref{Morse function}.
\end{proof}

\begin{lemma}\label{homo trivial implies homo cell}
If $M\setminus D$ is  homologically trivial, then it is a homology cell. In particular, Corollary \ref{main corollary} follows from Theorem \ref{main theorem}.
\end{lemma}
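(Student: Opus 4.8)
The plan is to reduce the homology-cell condition to the Thom isomorphism for the normal bundle of $D$ and then to pass from the resulting Gysin isomorphism to the inclusion-induced map via Poincar\'e duality. First I would replace the homological triviality of $U:=M\setminus D$ by its cohomological counterpart: since $\widetilde{H}_i(U;\mathbb{Z})=0$ for all $i$, the universal coefficient theorem forces $\widetilde{H}^i(U;\mathbb{Z})=0$ for all $i$ as well (both the $\mathrm{Hom}$ and $\mathrm{Ext}$ terms vanish). Because $D$ is a symplectic hypersurface, its normal bundle $\nu$ in $M$ is a complex line bundle, hence an oriented rank-two real bundle, and a closed tubular neighborhood $N$ of $D$ is the associated disk bundle, with $N\setminus D$ homotopy equivalent to its circle bundle.

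Next I would combine three standard isomorphisms. Excision identifies $H^{k}(M,U)\cong H^{k}(N,N\setminus D)$, and the Thom isomorphism for the oriented bundle $\nu$ gives $H^{k}(N,N\setminus D)\cong H^{k-2}(D)$. Feeding $\widetilde{H}^{\ast}(U)=0$ into the long exact cohomology sequence of the pair $(M,U)$ shows that the natural map $H^{k}(M,U)\to H^{k}(M)$ is an isomorphism for every $k\geq 2$, since the neighbouring terms $H^{k-1}(U)$ and $H^{k}(U)$ both vanish in this range. The composite of these three maps is precisely the Gysin pushforward $i_{!}\colon H^{k-2}(D)\to H^{k}(M)$ attached to the inclusion $i\colon D\hookrightarrow M$, so I obtain that $i_{!}$ is an isomorphism for all $2\leq k\leq 2n$ (outside this range both groups vanish).

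Finally I would invoke Poincar\'e duality to convert this shifted isomorphism into a statement about the inclusion map itself. For an embedding of closed oriented manifolds $i\colon D^{2n-2}\hookrightarrow M^{2n}$, the Gysin map is Poincar\'e dual to the homomorphism induced by $i$ on homology, that is, $PD_{M}\circ i_{!}=i_{\ast}\circ PD_{D}$. As $PD_{D}$ and $PD_{M}$ are isomorphisms, $i_{!}$ is an isomorphism in degree $k$ if and only if $i_{\ast}\colon H_{2n-k}(D;\mathbb{Z})\to H_{2n-k}(M;\mathbb{Z})$ is. Letting $j:=2n-k$ range over $0\leq j\leq 2n-2$ as $k$ ranges over $2\leq k\leq 2n$, I conclude that $i_{\ast}\colon H_{j}(D;\mathbb{Z})\to H_{j}(M;\mathbb{Z})$ is bijective for all $0\leq j\leq 2(n-1)$, which is exactly the definition of $M\setminus D$ being a homology cell. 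The ``in particular'' assertion is then immediate: any pair satisfying the hypotheses of Corollary \ref{main corollary} has homologically trivial complement, hence a homology-cell complement, so the conclusions of Theorem \ref{main theorem} apply, and contractibility implies homological triviality.

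The computations above are routine once the framework is set up; the one point demanding care is the identification of the composite excision--Thom--connecting map with the Gysin pushforward $i_{!}$, together with its Poincar\'e duality relation $PD_{M}\circ i_{!}=i_{\ast}\circ PD_{D}$ to $i_{\ast}$. Keeping track of orientations (guaranteed throughout by the symplectic, equivalently almost-complex, structure) and of the degree shift is the only genuine bookkeeping, and I anticipate no serious obstacle.
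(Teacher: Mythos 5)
Your proof is correct, but it takes a different route from the paper's. The paper's argument is a two-step application of the Poincar\'e--Alexander--Lefschetz duality theorem for compact pairs $B\subset A$ in $M$ (\cite[p.351]{Br}): taking $(A,B)=(M,D)$ gives $H_k(M,D;\mathbb{Z})\cong H^{2n-k}(M\setminus D;\mathbb{Z})$, which vanishes for $0\leq k\leq 2n-1$ by homological (hence, via universal coefficients, cohomological) triviality of $M\setminus D$, and then the homology long exact sequence of the pair $(M,D)$ immediately yields that $H_k(D;\mathbb{Z})\to H_k(M;\mathbb{Z})$ is bijective for $0\leq k\leq 2(n-1)$. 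You instead unpack this duality by hand in the smooth setting: tubular neighborhood plus excision plus the Thom isomorphism for the oriented rank-two normal bundle identify $H^{k}(M,M\setminus D)$ with $H^{k-2}(D)$, the cohomology long exact sequence of $(M,M\setminus D)$ shows $H^k(M,M\setminus D)\to H^k(M)$ is an isomorphism for $k\geq 2$, and the relation $PD_M\circ i_!=i_*\circ PD_D$ converts the resulting Gysin isomorphism into bijectivity of $i_*$ in the complementary degrees. Both arguments are sound; the trade-off is that the paper's version needs only one citation of Alexander--Lefschetz duality and the homology exact sequence (and works for any compact pair, with no smoothness or normal-bundle input), whereas yours requires $D$ to be a smooth submanifold with oriented normal bundle and the standard but orientation-sensitive compatibility of the Gysin map with Poincar\'e duality --- all of which are available here, since $D$ is a symplectic hypersurface and a sign in the duality square would not affect the isomorphism statement anyway. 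In effect your proof is a self-contained derivation, in this smooth oriented situation, of exactly the duality isomorphism the paper quotes; both conclude with the same exact-sequence bookkeeping and the same immediate deduction of the corollary.
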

\begin{proof}
The proof is a simple application of the Poincar\'{e}-Alexander-Lefschetz duality theorem. Indeed, for compact subsets $B\subset A$ in $M$, we have (\cite[p.351]{Br})
$$\label{PD}H_{k}(A,B;\mathbb{Z})\cong H^{2n-k}(M-B,M-A;\mathbb{Z}),\quad 0\leq k\leq 2n.$$
Taking $(A,B)=(M,D)$ in above leads to
$$H_k(M,D;\mathbb{Z})\cong H^{2n-k}(M-D;\mathbb{Z})=0,\quad0\leq k\leq 2n-1,$$
as $M\setminus D$ is homologically trivial. Then the homology long exact sequence for the pair $(M,D)$ reads
$$\cdots\longrightarrow H_{k+1}(M,D;\mathbb{Z})\longrightarrow H_{k}(D;\mathbb{Z})\overset{}{\longrightarrow}
H_{k}(M;\mathbb{Z})\longrightarrow H_k(M,D;\mathbb{Z})\longrightarrow\cdots.$$
Hence the natural homomorphisms $$H_k(D;\mathbb{Z})\overset{}{\longrightarrow}H_k(M;\mathbb{Z})$$ are bijective for
$0\leq k\leq 2(n-1)$.
\end{proof}

\subsection{The Chern number $c_1c_{n-1}$}
The main goal of this subsection is to show that the Chern number $c_1c_{n-1}[M]$ (resp. $c_1c_{n-2}[D]$) in Theorem \ref{main theorem} is the same as that of $\mathbb{P}^n$ (resp. $\mathbb{P}^{n-1}$). To this end, we first recall the notion of the \emph{$\chi_y$-genus} for almost-complex manifolds, which was first introduced by Hirzebruch (for projective manifolds) in \cite{Hi66}.

Let $M=(M,J)$ be an almost-complex manifold with almost-complex structure $J$. This $J$ induces the usual $\bar{\partial}$-operator acting on $\Omega^{p,q}(M)$ ($0\leq p,q\leq n$), the space of smooth complex-valued $(p,q)$-forms: $$\Omega^{p,q}(M)\xrightarrow{\bar{\partial}}\Omega^{p,q+1}(M).$$ The choice of
an almost-Hermitian metric on $M$ enables us to define the formal adjoint
$\bar{\partial}^{\ast}$ of the $\bar{\partial}$-operator: $$\Omega^{p,q}(M)\xrightarrow{\bar{\partial}^{\ast}}\Omega^{p,q-1}(M).$$

For every $0\leq p\leq n$, we have
the following Dolbeault-type elliptic operator $D_p$:
\be\label{GDC}D_p:~\bigoplus_{\textrm{$q$
even}}\Omega^{p,q}(M)\xrightarrow{\bar{\partial}+\bar{\partial}^{\ast}}\bigoplus_{\textrm{$q$
odd}}\Omega^{p,q}(M),\ee whose index is denoted by $\chi^{p}(M)$ in
the notation of Hirzebruch.
When $J$ is integrable, i.e., $M$ is a complex manifold, it is known that  $$\chi^{p}(M)=\sum_{q=0}^n(-1)^qh^{p,q}(M),$$
 where $h^{p,q}(M)$ are the Hodge numbers of $M$.
The \emph{Hirzebruch's
$\chi_{y}$-genus} of $M$, denoted by $\chi_{y}(M)$, is the generating
function of these $\chi^p(M)$:
$$\chi_{y}(M):=\sum_{p=0}^{n}\chi^{p}(M)y^{p}.$$
For example, we have $\chi_y(\mathbb{P}^n)=\sum_{i=0}^n(-y)^i$.

The general form of the Hirzebruch-Riemann-Roch theorem, which was first established by Hirzebruch for projective manifolds (\cite{Hi66}) and in the general case by Atiyah and Singer (\cite{AS}), allows us to compute these indices $\chi^{p}(M)$ (and hence $\chi_y(M)$) in terms of Chern numbers of $M$. To be more precise, we have (\cite[\S5.4]{HBJ})
\be\label{HRR}\chi_y(M)=\int_M
\prod_{i=1}^n\frac{x_i(1+ye^{-x_i})}{1-e^{-x_i}},\ee
where $x_1,\ldots,x_n$ are formal Chern roots of $M$, i.e., the
$i$-th elementary symmetric polynomial of $x_1,\ldots,x_n$ represents $c_i(M)$, the
$i$-th Chern class of $M$.

By (\ref{HRR}) it is well-known that $\chi_y(M)\big|_{y=-1}=c_n[M]$, which is the Euler characteristic of $M$ and the constant term of the Taylor expansion of $\chi_y(M)$ at $y=-1$. The following explicit formula of the coefficient in front of $(y+1)^2$ in the Taylor expansion of $\chi_y(M)$ has been observed independently in several articles (\cite[p.18]{NR}, \cite[p.141-143]{LW}, \cite[Thm 4.1]{Sa}).
\begin{lemma}\label{Taylor expansion}
If we set
\be\label{chiy-1}\int_M\prod_{i=1}^n\frac{x_i(1+ye^{-x_i})}{1-e^{-x_i}}
=:\sum_{j=0}^nK_j(M)\cdot(y+1)^j,\ee
then
$$K_2(M)=
\frac{1}{12}\Big[\frac{n(3n-5)}{2}c_{n}+c_{1}c_{n-1}\Big][M].$$
In particular, the Chern number $c_1c_{n-1}$ of an almost-complex manifold is completely determined by its $\chi_y$-genus.
\end{lemma}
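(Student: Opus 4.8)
The plan is to reduce the computation of $K_2(M)$ to a purely combinatorial identity among the elementary symmetric polynomials of the Chern roots. First I would substitute $t=y+1$ into the Hirzebruch integrand (\ref{HRR}), so that the Taylor coefficients $K_j(M)$ in (\ref{chiy-1}) become precisely the coefficients of $t^j$. The point of this substitution is that it simplifies each factor dramatically: writing $1+ye^{-x_i}=(1-e^{-x_i})+te^{-x_i}$, the $i$-th factor becomes
\[
\frac{x_i\big[(1-e^{-x_i})+te^{-x_i}\big]}{1-e^{-x_i}}=x_i+t\,g(x_i),\qquad g(x):=\frac{x}{e^x-1},
\]
so the entire integrand equals $\prod_{i=1}^n\big(x_i+t\,g(x_i)\big)$.

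Next I would extract the coefficient of $t^2$, which is $\sum_{i<j}g(x_i)g(x_j)\prod_{k\neq i,j}x_k$, and retain only its top-degree ($2n$) component, since that is all that survives integration over $M$. As $\prod_{k\neq i,j}x_k$ already has degree $2(n-2)$, only the degree-$\le 2$ part of each product $g(x_i)g(x_j)$ is relevant, and this comes from the first three Taylor coefficients of $g$, namely $g(x)=1-\tfrac{x}{2}+\tfrac{x^2}{12}+\cdots$ (the Bernoulli numbers $B_0,B_1,B_2$). A short expansion gives the degree-two part of $g(x_i)g(x_j)$ as $\tfrac{1}{12}(x_i^2+x_j^2)+\tfrac14 x_i x_j$, whence
\[
K_2(M)[M]=\Big[\tfrac14\sum_{i<j}x_ix_j\prod_{k\neq i,j}x_k+\tfrac{1}{12}\sum_{i<j}(x_i^2+x_j^2)\prod_{k\neq i,j}x_k\Big][M].
\]

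The final step is the symmetric-function bookkeeping that rewrites the two sums in terms of the Chern classes $c_k=e_k(x_1,\dots,x_n)$. The first sum is immediate: each summand $x_ix_j\prod_{k\neq i,j}x_k$ equals $\prod_k x_k=c_n$, and there are $\binom n2$ of them, giving $\tfrac14\binom n2 c_n$. For the second sum I would expand $c_1c_{n-1}=\big(\sum_a x_a\big)\big(\sum_b\prod_{k\neq b}x_k\big)$ and separate the diagonal terms $a=b$ (which contribute $n\,c_n$) from the off-diagonal terms $a\neq b$; since $x_a$ appears in $\prod_{k\neq b}x_k$ when $a\neq b$, the off-diagonal sum equals $\sum_{i<j}(x_i^2+x_j^2)\prod_{k\neq i,j}x_k$, yielding the identity $\sum_{i<j}(x_i^2+x_j^2)\prod_{k\neq i,j}x_k=c_1c_{n-1}-n\,c_n$. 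Substituting both and collecting the $c_n$-coefficient $\tfrac14\binom n2-\tfrac{n}{12}=\tfrac{n(3n-5)}{24}$ produces the claimed formula.

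I expect this last conversion --- pinning down the combinatorial factors and the exact cancellation in the off-diagonal sum --- to be the main source of potential error, rather than any conceptual difficulty; everything else is a mechanical power-series expansion. The concluding assertion that $c_1c_{n-1}[M]$ is determined by $\chi_y(M)$ then follows at once, because $K_2(M)$ is a Taylor coefficient of $\chi_y$ and $c_n[M]=\chi_y(M)\big|_{y=-1}$ is also read off from $\chi_y$, so $c_1c_{n-1}[M]$ can be solved from the displayed expression for $K_2(M)[M]$.
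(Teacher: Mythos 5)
Your computation is correct, and I checked it line by line: the substitution $t=y+1$ turns the $i$-th factor into $x_i+t\,g(x_i)$ with $g(x)=xe^{-x}/(1-e^{-x})=x/(e^x-1)$; the $t^2$-coefficient is $\sum_{i<j}g(x_i)g(x_j)\prod_{k\neq i,j}x_k$; the degree-two part of $g(x_i)g(x_j)$ is indeed $\tfrac{1}{12}(x_i^2+x_j^2)+\tfrac14x_ix_j$ from $B_0,B_1,B_2$; the identity $\sum_{i<j}(x_i^2+x_j^2)\prod_{k\neq i,j}x_k=c_1c_{n-1}-n\,c_n$ follows exactly as you say from splitting $c_1c_{n-1}$ into diagonal and off-diagonal terms; and $\tfrac14\binom n2-\tfrac n{12}=\tfrac{n(3n-5)}{24}$ recovers the stated coefficient. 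The route differs from the paper only in that the paper gives \emph{no} proof at all: it records the formula as ``observed independently'' in \cite[p.18]{NR}, \cite[p.141-143]{LW} and \cite[Thm 4.1]{Sa} and leaves the verification to those references, whose arguments (differentiating $\chi_y$ at $y=-1$, or equivalently expanding in powers of $y+1$) are of the same standard type as yours. So your write-up is a self-contained substitute for the citation, which is a genuine gain in completeness at essentially no cost in length; the only blemish is notational, namely the stray fundamental class in ``$K_2(M)[M]=\cdots$'' --- by the definition (\ref{chiy-1}), $K_2(M)$ is already the number obtained by integrating, so the left-hand side should read $K_2(M)$. Your closing observation is also right: since $c_n[M]=K_0(M)=\chi_y(M)\big|_{y=-1}$, one solves $c_1c_{n-1}[M]=12K_2(M)-\tfrac{n(3n-5)}{2}c_n[M]$, both terms being read off from the Taylor expansion of $\chi_y(M)$ at $y=-1$.
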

\begin{remark}
For basic properties of the coefficients $K_j$ in (\ref{chiy-1}) and their related applications in \cite{NR}, \cite{LW} and \cite{Sa} as well as in others, we refer the interested reader to \cite[\S3.2]{Li19} for a detailed summary. This kind of thing turns out to be a special case of what we called \emph{-1-phenomenon} in \cite{Li15} and \cite{Li17}.
\end{remark}

Now we come to the following fact.
\begin{lemma}\label{Chern number}
Let $M$ be a symplectic manifold endowed with a Hamiltonian circle action with isolated fixed points. If the Betti numbers $b_i(M)=b_i(\mathbb{P}^n)$ for all $i$, then $$c_1c_{n-1}[M]=c_1c_{n-1}[\mathbb{P}^n]=\frac{1}{2}n(n+1)^2.$$ In particular, the Chern numbers $c_1c_{n-1}[M]$ and $c_1c_{n-2}[D]$ in Theorem \ref{main theorem} are respectively the same as those of $\mathbb{P}^n$ and $\mathbb{P}^{n-1}$ due to Lemma \ref{homotopy Pn}.
\end{lemma}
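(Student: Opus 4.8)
The plan is to show that the hypothesis $b_i(M)=b_i(\mathbb{P}^n)$ forces the entire $\chi_y$-genus of $M$ to coincide with that of $\mathbb{P}^n$, and then to feed this into Lemma \ref{Taylor expansion}. First I would extract the combinatorial fixed-point data from the Betti numbers: by part (2) of Lemma \ref{Morse function}, the number of fixed points $P$ whose weights contain exactly $p$ negative integers equals $b_{2p}(M)=b_{2p}(\mathbb{P}^n)=1$ for each $0\leq p\leq n$, while the odd Betti numbers vanish. Hence $M$ has exactly $n+1$ isolated fixed points, which may be indexed as $P_0,\ldots,P_n$ so that $P_p$ has precisely $p$ negative weights.

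The key step is to compute $\chi_y(M)$ from this data. I would use the equivariant form of the Hirzebruch--Riemann--Roch / holomorphic Lefschetz fixed-point formula applied to the Dolbeault-type operators $D_p$ of (\ref{GDC}): substituting the equivariant Chern roots $e^{-x_j}\rightsquigarrow t^{-k_j^{(i)}}$ into (\ref{HRR}) localizes the equivariant $\chi_y$-genus to
$$\chi_y(M)(t)=\sum_{i}\,\prod_{j=1}^n\frac{1+y\,t^{-k_j^{(i)}}}{1-t^{-k_j^{(i)}}}\in\mathbb{Z}[t,t^{-1}][y].$$
I would then invoke the rigidity of the $\chi_y$-genus under $S^1$-actions, which guarantees that this expression is constant in $t$ and equals the ordinary $\chi_y(M)$, so that the constant may be evaluated by a limit rather than at the singular value $t=1$. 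Letting $t\to 0$, each factor tends to $-y$ or to $1$ according to the sign of its weight, so the product attached to $P_i$ tends to $(-y)^{n_i}$, where $n_i$ is the number of weights of one fixed sign. Since, by the first paragraph, the distribution of both the positive- and the negative-weight counts over the $n+1$ fixed points is $\{0,1,\ldots,n\}$, summing gives $\chi_y(M)=\sum_{p=0}^n(-y)^p=\chi_y(\mathbb{P}^n)$ independently of the sign convention.

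With $\chi_y(M)=\chi_y(\mathbb{P}^n)$ in hand the conclusion is formal. Since $c_n[M]$ is the Euler characteristic $\sum_i b_i(M)=n+1=c_n[\mathbb{P}^n]$, and since by Lemma \ref{Taylor expansion} the coefficient $K_2$ of $(y+1)^2$ in the expansion of $\chi_y$ equals $\frac{1}{12}\big[\frac{n(3n-5)}{2}c_n+c_1c_{n-1}\big][M]$ and is visibly determined by $\chi_y$, the equality $\chi_y(M)=\chi_y(\mathbb{P}^n)$ forces $c_1c_{n-1}[M]=c_1c_{n-1}[\mathbb{P}^n]$. A direct computation from $c(\mathbb{P}^n)=(1+x)^{n+1}$ gives $c_1c_{n-1}[\mathbb{P}^n]=(n+1)\binom{n+1}{2}=\frac{1}{2}n(n+1)^2$. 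For $D$ the same argument applies one dimension lower: by the final clause of Lemma \ref{Morse function} it carries a Hamiltonian $S^1$-action with isolated fixed points, and by Lemma \ref{homotopy Pn} it is a homotopy $\mathbb{P}^{n-1}$, hence has the Betti numbers of $\mathbb{P}^{n-1}$, yielding $c_1c_{n-2}[D]=\frac{1}{2}(n-1)n^2=c_1c_{n-2}[\mathbb{P}^{n-1}]$.

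The main obstacle is the middle step, the identification $\chi_y(M)=\sum_i(-y)^{n_i}$. This rests on two inputs that must be cited and applied with care: the rigidity of the $\chi_y$-genus for $S^1$-actions on almost-complex manifolds, so that the localized expression above is genuinely a constant Laurent polynomial in $y$, and the correct bookkeeping of signs and conventions in the fixed-point contributions so that the chosen limit reproduces exactly the powers $(-y)^p$ dictated by the Morse data. Everything preceding and following this step is routine.
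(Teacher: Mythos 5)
Your proposal is correct and follows essentially the same route as the paper: the paper also derives $\chi_y(M)=\sum_{i=0}^n(-y)^i=\chi_y(\mathbb{P}^n)$ from the Morse-theoretic fixed-point data of Lemma \ref{Morse function} and then applies Lemma \ref{Taylor expansion}, except that where you re-derive the localized formula via equivariant localization, rigidity, and the $t\to 0$ limit, the paper simply cites this formula as \cite[(2.6)]{Li12a} (whose proof is exactly your rigidity-plus-limit argument, valid in the symplectic setting via a compatible invariant almost-complex structure). Your sign bookkeeping is also consistent with the paper's two-sided identity $\sum_i(-y)^{d_i}=\sum_i(-y)^{n-d_i}$, corresponding to the two limits $t\to 0$ and $t\to\infty$.
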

\begin{proof}
Since $b_i(M)=b_i(\mathbb{P}^n)$, the Euler characteristic $c_n[M]=n+1$. Following the notation in Section \ref{subsection2.1}, let $M^{S^1}=\{P_1,\ldots,P_{n+1}\}$ and the weights around $P_i$ are $k_1^{(i)},\ldots,k_n^{(i)}$. The Atiyah-Bott-Singer fixed point theorem allows us to compute $\chi_y(M)$ in terms of the information around $M^{S^1}$. In our case this formula reads (\cite[(2.6)]{Li12a}):
\be\label{localizaition of chiy}
\chi_y(M)=\sum_{i=1}^{n+1}(-y)^{d_i}=
\sum_{i=1}^{n+1}(-y)^{n-d_i},
\ee
where $d_i$ is the number of negative integers in $k_1^{(i)},\ldots,k_n^{(i)}$. Lemma \ref{Morse function}, together with the facts $b_i(M)=b_i(\mathbb{P}^n)$, implies that $\{d_1,\ldots,d_{n+1}\}=\{0,1,\ldots,n\}.$ Thus (\ref{localizaition of chiy}) becomes
$$
\chi_y(M)=\sum_{i=0}^{n}(-y)^{i},$$
which is equal to $\chi_y(\mathbb{P}^n)$. Therefore
the desired result follows from Lemma \ref{Taylor expansion}.
\end{proof}
\begin{remark}
\begin{enumerate}
\item
This kind of residue-type formula (\ref{localizaition of chiy}) was first obtained by Kosniowski in the setting of complex manifolds equipped with holomorphic vector fields (\cite[Thm 1]{Ko70}). In \cite{Li12a} the author applied this idea to give applications to symplectic manifolds endowed with symplectic or Hamiltonian circle actions with isolated fixed points.

\item
For K\"{a}hler manifolds with the same Betti numbers as $\mathbb{P}^n$, Lemma \ref{Chern number} was observed in \cite[Cor.2.5]{LW}, which turns out to be very useful in the characterization of $\mathbb{P}^n$ as the first Chern class is involved in it.
\end{enumerate}
\end{remark}

\subsection{The first Chern class}
The goal in this subsection is to determine the first Chern classes $c_1(M)$ and $c_1(D)$ in Theorem \ref{main theorem}.

Let $D\overset{i}{\hookrightarrow}M$ be the inclusion map of the invariant hypersurface $D$ in $M$ in Theorem \ref{main theorem}, and $N_D$ the complex line bundle over $D$ normal to the complex tangent bundle $(T_D,J)$ of $D$ in the complex tangent bundle $(T_M,J)$ of $M$. Since the canonical orientations induced from the almost-complex structures are understood, by Lemma \ref{homotopy Pn} and Remark \ref{definition remark}, we shall choose once for all the generators $x\in H^2(M;\mathbb{Z})$ and $x_0\in H^2(D;\mathbb{Z})$ such that
\be\label{cohomology data}
x^n[M]=x_0^{n-1}[D]=1,\quad i^{\ast}(x)=x_0,\quad c_1(N_D)=x_0.
\ee

With all materials in this section understood, we can now deduce the following result, whose arguments are basically parallel to those scattered in \cite{LP}.
\begin{proposition}\label{first Chern class}
Let $M$ and $D$ be as in Theorem \ref{main theorem}. Then $$\big(c_1(M),c_1(D)\big)=\text{$\big((n+1)x,nx_0\big)$ or $\big(\frac{1}{2}(n+1)x,\frac{1}{2}(n-1)x_0\big)$},$$
where the latter may occur only if $n\equiv 3\pmod 4$.
\end{proposition}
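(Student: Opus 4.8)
The plan is to determine the pair $\big(c_1(M),c_1(D)\big)$ entirely by characteristic-number bookkeeping on the truncated polynomial rings $H^*(M)=\mathbb{Z}[x]/(x^{n+1})$ and $H^*(D)=\mathbb{Z}[x_0]/(x_0^n)$ supplied by Lemma \ref{homotopy Pn}, and then to isolate the mod-$4$ clause by a parity argument. Write $c_1(M)=\alpha x$, $c_{n-1}(M)=p\,x^{n-1}$, and $c_{n-2}(D)=q\,x_0^{n-2}$ with $\alpha,p,q\in\mathbb{Z}$; the Euler characteristics give $c_n(M)[M]=n+1$ and $c_{n-1}(D)[D]=n$.

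First I would run the adjunction sequence $0\to T_D\to i^*T_M\to N_D\to 0$, so that $i^*c(T_M)=c(T_D)(1+x_0)$. Comparing first Chern classes and using (\ref{cohomology data}) gives $i^*c_1(M)=c_1(D)+c_1(N_D)$, i.e. $c_1(D)=(\alpha-1)x_0$; comparing the degree-$(n-1)$ parts gives $p=n+q$. This already shows both candidate pairs are compatible with adjunction and reduces the whole problem to pinning down the single integer $\alpha$.

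Next I would feed in Lemma \ref{Chern number}, which yields $c_1c_{n-1}[M]=c_1c_{n-1}[\mathbb{P}^n]=\tfrac12 n(n+1)^2$ and $c_1c_{n-2}[D]=c_1c_{n-2}[\mathbb{P}^{n-1}]=\tfrac12 n^2(n-1)$. In the integer coordinates above these read $\alpha p=\tfrac12 n(n+1)^2$ and $(\alpha-1)q=\tfrac12 n^2(n-1)$. Eliminating $p$ and $q$ through $p=n+q$ collapses the system to a single quadratic in $\alpha$, which I expect to be $2\alpha^2-3(n+1)\alpha+(n+1)^2=0$. Its discriminant is $(n+1)^2$, so the roots are exactly $\alpha=n+1$ and $\alpha=\tfrac12(n+1)$; combined with $c_1(D)=(\alpha-1)x_0$ this produces precisely the two asserted pairs.

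Finally, the mod-$4$ restriction. The root $\alpha=\tfrac12(n+1)$ is an integer only when $n$ is odd, which already excludes all even $n$. To obtain $n\equiv 3\pmod 4$ I would invoke the congruence $c_1(M)\equiv w_2(M)\pmod 2$, valid on any almost-complex manifold. Since $M\simeq\mathbb{P}^n$ and Stiefel--Whitney classes of the tangent bundle of a closed manifold are homotopy invariants (they are computed from the cohomology ring and Steenrod squares via Wu's formula), we have $w_2(M)=w_2(\mathbb{P}^n)=(n+1)\bar x$, whence $\alpha\equiv n+1\pmod 2$ in $H^2(M;\mathbb{Z}/2)$. For odd $n$ the quantity $n+1$ is even, so this forces $\tfrac12(n+1)$ to be even, i.e. $4\mid n+1$. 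I expect this Stiefel--Whitney parity step to be the one genuinely non-mechanical point: the quadratic hands over both values for free, but only the $w_2$-congruence explains why the fractional-looking second solution is confined to $n\equiv 3\pmod 4$. Note that the argument makes no attempt to rule the second case out altogether, which is consistent with Remark \ref{remark after main theorem}: the method stops exactly here.
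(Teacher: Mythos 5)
Your proposal is correct and follows essentially the same route as the paper's own proof: the identical adjunction sequence gives $c_1(M)=c_1(D)+1$ and $c_{n-1}(M)=n+c_{n-2}(D)$, Lemma \ref{Chern number} turns these into the paper's equation (\ref{3}) (your quadratic $2\alpha^2-3(n+1)\alpha+(n+1)^2=0$ with discriminant $(n+1)^2$ is just its cleared form, yielding $\alpha=n+1$ or $\tfrac12(n+1)$), and the mod-$4$ clause is settled by the same Wu-formula argument that $c_1\bmod 2=w_2$ is a homotopy invariant, forcing $\tfrac12(n+1)\equiv n+1\pmod 2$ and hence $n\equiv 3\pmod 4$. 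The only cosmetic differences are that you make the quadratic explicit and note separately that integrality of $\tfrac12(n+1)$ already excludes even $n$, a case the paper's congruence handles implicitly.
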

\begin{proof}
The three complex vector bundles $(T_M,J)$, $(T_D,J)$ and $N_D$ are related by the short exact sequence
\be\label{short exact sequence}0\longrightarrow (T_D,J)\longrightarrow i^{\ast}(T_M,J)\longrightarrow N_D\longrightarrow0.\ee
The Chern classes $$c_i(M)=c_i(T_M,J)\in\mathbb{Z}x^i,\quad c_j(D)=c_j(T_D,J)\in\mathbb{Z}x_0^i
\quad(1\leq i\leq n,~1\leq j\leq n-1),$$
both due to Lemma \ref{homotopy Pn}. 
By abuse of notation these $c_i(M)$ and $c_j(D)$ can be viewed as \emph{integers} by ignoring $x^i$ and $x_0^j$ respectively. 

With this convention understood, taking $c_1(i^{\ast}(T_M,J))$ and $c_{n-1}(i^{\ast}(T_M,J))$ in the short exact sequence (\ref{short exact sequence}) and applying the facts in (\ref{cohomology data}) yield
\be\label{1}c_1(M)=c_1(D)+1,\quad c_{n-1}(M)=n+c_{n-2}(D),\ee
where the fact that $c_{n-1}(D)=n$ is the Euler characteristic of $D$ has been used.

Lemma \ref{Chern number} implies that
\be\label{2}c_1(M)c_{n-1}(M)=\frac{1}{2}n(n+1)^2,\quad c_1(D)c_{n-2}(D)=\frac{1}{2}(n-1)n^2.\ee
Combing (\ref{1}) with (\ref{2}) leads to
\be\label{3}\frac{n(n+1)^2}{2c_1(M)}=n+\frac{(n-1)n^2}{2(c_1(M)-1)}.\ee
Solving (\ref{3}) gives us $c_1(M)=n+1$ or $\frac{1}{2}(n+1)$.

It suffices to show that the case $c_1(M)=\frac{1}{2}(n+1)$ may occur only if $n\equiv 3\pmod 4$. Indeed, $M$ is homotopy equivalent to $\mathbb{P}^n$ by Lemma \ref{homotopy Pn}. Assume that $c_1(M)=\frac{1}{2}(n+1)$. Since $c_1(M)$ modulo two, which is the second Stiefel-Whitney class of $M$, is a homotopy type invariant due to Wu's formula (\cite[p.130]{MS}), thus
$$\frac{1}{2}(n+1)\equiv n+1 \pmod{2},$$
which deduces that $n\equiv 3\pmod 4$.
\end{proof}

\section{Verification of the assumptions in Theorem \ref{Hattori}}\label{section4}
In this section the technical assumptions in Theorem \ref{Hattori} shall be verified to hold true in our setting.

We assume in this section that $M$ be as in Theorem \ref{main theorem}. The notation and symbols introduced in Sections \ref{section2} and \ref{section3} are kept used in the sequel.
\begin{lemma}\label{condition n+1}
Let $L$ be the line bundle over $M$ such that $c_1(L)=x$, which is admissible. If $c_1(M)=(n+1)x$, then the Condition C in (\ref{condition C formula}) holds for $L$ with $k_0=n+1$, i.e.,
\be\label{condition C formula n+1}
\sum_{j=1}^nk_{j}^{(i)}=(n+1)a_i+a,\quad\forall ~1\leq i\leq n+1,\ee
where $a$ is some integer and $a_1,\ldots,a_{n+1}$ are the weights of $L$ at the fixed points $P_i$ with respect to some lifting of this circle action.
\end{lemma}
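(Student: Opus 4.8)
The plan is to pass to $S^1$-equivariant cohomology and to read off Condition $C$ directly from the restriction of equivariant first Chern classes to the isolated fixed points. Write $u$ for the standard generator of $H^{*}(BS^1;\mathbb{Z})=\mathbb{Z}[u]$, so that for each $i$ the restriction identifies $H^{*}_{S^1}(P_i;\mathbb{Z})$ with $\mathbb{Z}[u]$. Since $b_1(M)=0$ by Lemma \ref{Morse function}, $L$ is admissible, so after fixing a lifting of the action to $L$ its equivariant first Chern class $c_1^{S^1}(L)\in H^2_{S^1}(M;\mathbb{Z})$ restricts at $P_i$ to $a_i u$, this being the very meaning of the weight $a_i$. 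Likewise the canonical equivariant structure on the tangent bundle furnishes $c_1^{S^1}(T_M)\in H^2_{S^1}(M;\mathbb{Z})$, whose restriction at $P_i$ is the equivariant first Chern class of the $S^1$-module $T_{P_i}M=\sum_j t^{k_j^{(i)}}$, namely $\bigl(\sum_{j=1}^n k_j^{(i)}\bigr)u$.

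Next I would form the difference $$\eta:=c_1^{S^1}(T_M)-(n+1)\,c_1^{S^1}(L)\in H^2_{S^1}(M;\mathbb{Z}).$$ Under the forgetful homomorphism $H^2_{S^1}(M;\mathbb{Z})\to H^2(M;\mathbb{Z})$, the class $\eta$ maps to $c_1(M)-(n+1)c_1(L)=(n+1)x-(n+1)x=0$ by the hypothesis $c_1(M)=(n+1)x$. The crucial structural input is that $M$ has no odd-dimensional cohomology and torsion-free homology (Lemma \ref{Morse function}), so the action is equivariantly formal and $H^{*}_{S^1}(M;\mathbb{Z})\cong H^{*}(M;\mathbb{Z})\otimes_{\mathbb{Z}} H^{*}(BS^1;\mathbb{Z})$ as $H^{*}(BS^1;\mathbb{Z})$-modules. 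In degree two this reads $H^2_{S^1}(M;\mathbb{Z})\cong H^2(M;\mathbb{Z})\oplus\mathbb{Z}\,u$, with the forgetful map being the augmentation $u\mapsto 0$; hence its kernel is precisely $\mathbb{Z}\,u$, and therefore $\eta=a\,u$ for a unique integer $a$.

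Finally I would restrict $\eta=a\,u$ to each fixed point $P_i$ and compare coefficients of $u$ in $H^2_{S^1}(P_i;\mathbb{Z})=\mathbb{Z}u$, obtaining $$\sum_{j=1}^n k_j^{(i)}-(n+1)a_i=a,\qquad 1\le i\le n+1,$$ which is exactly \eqref{condition C formula n+1}, with $k_0=n+1\in\mathbb{Z}_{\ge 0}$. I expect the genuine content to reside in two places rather than in any computation. The first is the identification of the kernel of the forgetful map with $\mathbb{Z}u$: this is not formal manipulation but uses equivariant formality, which here is guaranteed by the perfectness of the moment map and the absence of odd cohomology (Lemma \ref{Morse function}). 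The second is the bookkeeping around equivariant lifts, where I would verify that $c_1^{S^1}(L)|_{P_i}=a_i u$ under the same sign normalization that defines the weights $a_i$, and that replacing the lifting of the action on $L$ (which shifts all the $a_i$ by a common integer) is absorbed into the constant $a$; this confirms that the resulting Condition $C$ is independent of the chosen lifting, as required by Definition \ref{condition C}.
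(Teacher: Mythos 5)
Your proof is correct, and it reaches the conclusion by a genuinely different route than the paper. The paper argues bundle-theoretically: since $c_1(M)=(n+1)x$, the anticanonical bundle $L_0=\bigwedge^n(T_M,J)$ is isomorphic to $L^{\otimes(n+1)}$; the natural lift of the action to $L_0$ has weight $\sum_{j}k_j^{(i)}$ at $P_i$, the lift induced on $L^{\otimes(n+1)}$ by the chosen lift on $L$ has weight $(n+1)a_i$, and two lifts of the action to one and the same line bundle over a connected base have weights differing by a simultaneous integer $a$ --- no equivariant cohomology appears. Your argument is the equivariant-cohomological form of the same mechanism: the ambiguity of an equivariant structure on a fixed line bundle is precisely the kernel of the forgetful map $H^2_{S^1}(M;\mathbb{Z})\to H^2(M;\mathbb{Z})$, which you identify with $\mathbb{Z}u$, and restriction to the fixed points then yields (\ref{condition C formula n+1}). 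Two small observations on your key step. First, full equivariant formality is more than you need: in the Serre spectral sequence of $M\hookrightarrow M_{S^1}\to BS^1$, the kernel of the forgetful map in degree two is $E_\infty^{2,0}$ (note $E_\infty^{1,1}=0$ automatically since $H^1(BS^1)=0$), and $E_\infty^{2,0}=E_2^{2,0}=\mathbb{Z}u$ already follows from $H^1(M;\mathbb{Z})=0$, which Lemma \ref{Morse function} supplies, because the only differential hitting $E^{2,0}$ comes from $E_2^{0,1}=H^1(M;\mathbb{Z})$. Second, to know that the kernel is \emph{faithfully} generated by $u$ (i.e., that $\pi^*\colon H^2(BS^1)\to H^2_{S^1}(M)$ is injective) and to read off the coefficient $a$ at each $P_i$, you should note that the composite $H^2(BS^1)\to H^2_{S^1}(M)\to H^2_{S^1}(P_i)=H^2(BS^1)$ is the identity, i.e., restriction to any fixed point splits $\pi^*$; this is where the nonemptiness of $M^{S^1}$ enters. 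With these points made explicit your argument is complete; its advantage is that it isolates exactly where each hypothesis is used ($b_1=0$ for admissibility and for the kernel computation, a fixed point for the splitting), while the paper's route buys elementarity, needing only the fact that two liftings shift all weights by a common constant.
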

\begin{proof}
Any line bundle is admissible since $b_1(M)=0$ (\cite{HY}, \cite[Lemma 3.1]{Li12b}). Arbitrarily choose a lifting of $L$ and let $a_1,\ldots,a_{n+1}$ be the weights of $L$ at the fixed points $P_1,\ldots,P_{n+1}$ with respect to it.

Let the line bundle $L_0:=\bigwedge^n(TM,J)$, the $n$-the wedge of the complex bundle $(TM,J)$, which is usually called anti-canonical line bundle in algebraic geometry. Then $c_1(L_0)=c_1(M)$ and its weights at $P_i$ with respect to the natural action induced from that on $M$ are $\sum_{j=1}^{n}{k_j^{(i)}}$.

The condition of $c_1(M)=(n+1)x$ implies that the two line bundles $L_0$ and $L^{\otimes(n+1)}$ are equal in the isomorphic sense: $L_0=L^{\otimes(n+1)}$. Note that the weights of $L^{\otimes(n+1)}$ at $P_i$ with respect to the above-chosen lifting are $(n+1)a_i$. This means that, with respect to two possibly different liftings, the weights of the line bundle $L_0=L^{\otimes(n+1)}$ at $P_i$ are respectively $\sum_{j=1}^{n}{k_j^{(i)}}$ and $(n+1)a_i$. Therefore for all $i$ they must differ by a simultaneous integer, say $a$. This shows the desired (\ref{condition C formula n+1}) and completes the proof.
\end{proof}

With Lemma \ref{condition n+1} in hand, in order to apply Theorem \ref{Hattori}, it suffices to show that the line bundle $L$ in Lemma \ref{condition n+1} is quasi-ample.

\begin{lemma}\label{quasi-ample lemma}
The line bundle $L$ in Lemma \ref{condition n+1} is quasi-ample.
\end{lemma}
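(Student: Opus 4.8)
We must show the admissible line bundle $L$ with $c_1(L)=x$ (in the case $c_1(M)=(n+1)x$) is quasi-ample, meaning (by Definition \ref{def of quasi-ample}) that its weights $a_1,\ldots,a_{n+1}$ at the fixed points are pairwise distinct and that $c_1^n(L)[M]\neq 0$.

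**The easy half.** The Chern-number condition is immediate: since $c_1(L)=x$ and $x^n[M]=1$ by the normalization (\ref{cohomology data}), we have $c_1^n(L)[M]=x^n[M]=1\neq 0$. So the entire content of the lemma is the \emph{pairwise distinctness} of the weights $a_i$ of $L$.

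My plan below sketches how I would attack the distinctness.

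The plan is to compare the weights $a_i$ of $L$ against the combinatorial data coming from the moment map. First I would fix a lifting and recall from Lemma \ref{Morse function} that the moment map $f$ is a perfect Morse function whose critical points are exactly $P_1,\ldots,P_{n+1}$, with Morse index at $P_i$ equal to twice the number $d_i$ of negative weights of the $S^1$-action there. Since the Betti numbers agree with those of $\mathbb{P}^n$ (Lemma \ref{homotopy Pn}), the indices $\{d_1,\ldots,d_{n+1}\}$ are exactly $\{0,1,\ldots,n\}$, so after relabeling I may assume $d_i=i-1$; in particular there is a unique minimum $P_1$ and a unique maximum $P_{n+1}$ of $f$.

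The key idea is to identify the weights $a_i$ with the values, up to an affine normalization, of the moment map at the fixed points. The admissible lifting of $L$ determines an $S^1$-equivariant structure, and the restriction map to equivariant cohomology $H^2_{S^1}(M)\to H^2_{S^1}(P_i)\cong\mathbb{Z}$ records the weight $a_i$; under the Kirwan map / ABBV localization this weight is governed by $f(P_i)$. Concretely I would use the localization principle: because $b_1(M)=0$ and the fixed points are isolated, the equivariant first Chern class of $L$ is determined by the ordinary class $x$ together with the weights $a_i$, and the gradient flow of $f$ forces these $a_i$ to be \emph{strictly monotone} along the partial order given by $f$. The cleanest route is to show $a_1<a_2<\cdots<a_{n+1}$ (after the relabeling above) by an induction on the index, invoking that $L$ restricted to the unstable/stable spheres through consecutive critical points changes weight by a nonzero amount; this nonzero change is exactly what distinctness requires.

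The main obstacle I expect is precisely this last monotonicity step: proving that the weights of $L$ genuinely \emph{separate} the fixed points rather than merely being constrained by Condition $C$. Condition $C$ (\ref{condition C formula n+1}) gives $\sum_j k_j^{(i)}=(n+1)a_i+a$, so distinctness of the $a_i$ is \emph{equivalent} to distinctness of the trace-of-weights $\sum_j k_j^{(i)}$; hence I would recast the goal as showing that the quantity $\sum_{j=1}^n k_j^{(i)}$ takes $n+1$ distinct values. To establish this I anticipate needing the residue restrictions from Lemma \ref{Bott residue formula} together with the $\chi_y$-localization (\ref{localizaition of chiy}): knowing $\chi_y(M)=\sum_{i=0}^n(-y)^i$ pins down the distribution of negative weights, and feeding this into the Bott relations $\sum_i (\sum_j k_j^{(i)})^r/\prod_j k_j^{(i)}=0$ for $0\le r\le n-1$ should rule out any coincidence $\sum_j k_j^{(i)}=\sum_j k_j^{(i')}$ for $i\neq i'$. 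If a purely formal argument from these identities is not enough, the fallback is the moment-map monotonicity sketched above, which geometrically guarantees the values $f(P_i)$—and hence the normalized weights $a_i$—are all distinct.
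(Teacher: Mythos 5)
Your reduction is the right one, and matches the paper up to that point: the Chern-number half is immediate from $c_1^n(L)[M]=x^n[M]=1$, and Condition $C$ with $k_0=n+1$ makes distinctness of the $a_i$ equivalent to distinctness of the weight sums $\lambda_i:=\sum_{j=1}^n k_j^{(i)}$. But the core step---actually proving these $n+1$ integers are pairwise distinct---is where your proposal has a genuine gap. Your primary mechanism, moment-map monotonicity, does not work: for a Hamiltonian circle action with isolated fixed points it is simply \emph{false} in general that distinct fixed points have distinct moment-map values (on $S^2\times S^2$ with the diagonal rotation and equal areas, the two index-$2$ fixed points have the same moment value), so ``the gradient flow forces the $a_i$ to be strictly monotone'' cannot be invoked; it is precisely the statement to be proved. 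Moreover, fixed points of consecutive Morse index need not be joined by gradient spheres at all, and even when two fixed points are joined by an invariant sphere on which the action rotates with weight $k$, the difference of the weights of $L$ at its ends equals $k\cdot\langle c_1(L),[\text{sphere}]\rangle$, which vanishes whenever the sphere pairs to zero with $x$---so the ``nonzero change of weight'' you posit is not automatic. Your two routes are each offered as a fallback for the other, and neither is carried out.

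The paper's proof is exactly the ``purely formal argument from these identities'' that you hoped would suffice, and it does, with no $\chi_y$-genus input needed. Suppose the $\lambda_i$ take only $t\leq n$ distinct values $\widetilde{\lambda}_1<\cdots<\widetilde{\lambda}_t$, and set $\mu_i:=\sum_{\lambda_j=\widetilde{\lambda}_i}1/e_j$ with $e_j:=\prod_{l}k_l^{(j)}$. The Bott relations of Lemma \ref{Bott residue formula} for $r=0,1,\ldots,t-1$ (available since $t-1\leq n-1$) give the homogeneous linear system $\sum_{i=1}^{t}(\widetilde{\lambda}_i)^r\mu_i=0$, whose coefficient matrix is Vandermonde in the pairwise distinct $\widetilde{\lambda}_i$, hence invertible; so all $\mu_i=0$. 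But then the $r=n$ relation forces $c_1^n[M]=\sum_i(\widetilde{\lambda}_i)^n\mu_i=0$, contradicting $c_1^n[M]=(n+1)^n\neq0$ (from the hypothesis $c_1(M)=(n+1)x$ of Lemma \ref{condition n+1}). Hence $t=n+1$. This is the missing argument; as written, your proposal establishes the easy half and the correct reformulation but not the lemma.
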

\begin{proof}
By Definition \ref{quasi-ample} it suffices to show that the weights $a_i$ are pairwise distinct as $c_1^n(L)[M]=1$, which, according to (\ref{condition C formula n+1}), is equivalent to showing that the $n+1$ integers $$\sum_{j=1}^nk_{j}^{(1)},\ldots,\sum_{j=1}^nk_{j}^{(n+1)}$$
are pairwise distinct. We shall apply Lemma \ref{Bott residue formula} to derive this conclusion.

For simplicity let
$$\lambda_i:=\sum_{j=1}^nk_{j}^{(i)},\quad\text{and}\quad e_i:=\prod_{j=1}^nk_{j}^{(i)},\quad 1\leq i\leq n+1.$$
Lemma \ref{Bott residue formula} says that
\begin{eqnarray}\label{Bott residue formula1}
\sum_{i=1}^{n+1}\frac{\lambda_i^r}
{e_i}=\left\{ \begin{array}{ll}
0,\quad& 0\leq r\leq n-1,\\
~\\
c_1^n[M],\quad& r=n.
\end{array} \right.
\end{eqnarray}

What we want to show is that the $n+1$ integers $\lambda_1,\ldots,\lambda_{n+1}$ are pairwise distinct. To this end, we list $\lambda_1,\ldots,\lambda_{n+1}$, among which \emph{a priori} some integers may be the same, in increasing order as
$$\widetilde{\lambda}_1<\widetilde{\lambda}_2<
\cdots<\widetilde{\lambda}_t,\quad (1\leq t\leq n+1).$$
So we need to show that $t=n+1$. Let
\be\label{sum}\mu_i:=\sum_{\overset{1\leq j\leq n+1,}{\lambda_j=\widetilde{\lambda}_i}}\frac{1}{e_j},\quad 1\leq i\leq t.\ee
With this notation in mind the formula (\ref{Bott residue formula1}) can be restated as
\begin{eqnarray}\label{Bott residue formula2}
\sum_{i=1}^{t}(\widetilde{\lambda}_i)^r\cdot\mu_i
=\left\{ \begin{array}{ll}
0,\quad& 0\leq r\leq n-1,\\
~\\
c_1^n[M],\quad& r=n.
\end{array} \right.
\end{eqnarray}

Suppose on the contrary that $t\leq n$. Then (\ref{Bott residue formula2}) implies that
\begin{eqnarray}\label{Bott residue formula3}
\left\{ \begin{array}{ll}
\mu_1+\mu_2+\cdots+\mu_t=0\\
~\\
\widetilde{\lambda}_1\cdot\mu_1+
\widetilde{\lambda}_2\cdot\mu_2+
\cdots+\widetilde{\lambda}_t\cdot\mu_t=0\\
\qquad\vdots\\
~\\
(\widetilde{\lambda}_1)^{t-1}\cdot\mu_1+
(\widetilde{\lambda}_2)^{t-1}\cdot\mu_2+
\cdots+(\widetilde{\lambda}_t)^{t-1}\cdot\mu_t=0.
\end{array} \right.
\end{eqnarray}
The Vandermonde determinant $$\det\big((\widetilde{\lambda}_i)^{j-1})\big)_{1\leq i,j\leq t}\neq 0$$ 
as these $\widetilde{\lambda}_i$ are pairwise distinct. This implies that the only solution to (\ref{Bott residue formula3}) is $$\mu_1=\cdots=\mu_t=0,$$
which in turn implies that $c_1^n[M]=0$ due to the case of $r=n$ in (\ref{Bott residue formula2}). This contradicts to the fact that $c_1^n[M]=(n+1)^n\neq0$. Therefore the assumption $t\leq n$ is impossible, which gives the desired result.
\end{proof}
\begin{remark}
When $t\leq n$, we show that all $\mu_i=0$. This implies that for every $i$ the sum on the right-hand side of (\ref{sum}) has at least two terms. So what we really show is that when the $S^1$-manifold $M$ has $n+1$ fixed points, either the $n+1$ integers $\lambda_1,\ldots,\lambda_{n+1}$ are pairwise distinct, or for any $\lambda_i$, there exists some $j\neq i$ such that $\lambda_i=\lambda_j$.
\end{remark}

\section{Proof of Theorem \ref{main theorem} and Corollary \ref{main corollary}}\label{section5}
Let $M$ and $D$ be as in Theorem \ref{main theorem}, and as assumed therein, either $n\not\equiv 3\pmod 4$, or $n\equiv 3 \pmod 4$ but $(c_1(M),c_1(D))\neq(\frac{1}{2}(n+1)x,\frac{1}{2}(n-1)x_0)$.

The reason that Corollary \ref{main corollary} follow from Theorem \ref{main theorem} has been explained in Lemma \ref{homo trivial implies homo cell}. We now finish the proof of Theorem \ref{main theorem}.
The fact that $M$ and $D$ be homotopy complex projective spaces has been shown in Lemma \ref{homotopy Pn}.

\begin{lemma}
The $S^1$-modules $T_{P_i}M$ \rm($1\leq i\leq n+1$\rm) are the same
as (\ref{linear representation}) for some pairwise integers $a_1,\ldots,a_{n+1}$.
\end{lemma}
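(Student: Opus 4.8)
The plan is to assemble the preparatory results of Sections \ref{section3} and \ref{section4} and then invoke Hattori's Theorem \ref{Hattori}. First I would pin down the first Chern class of $M$. By Proposition \ref{first Chern class}, the pair $\big(c_1(M),c_1(D)\big)$ equals either $\big((n+1)x,nx_0\big)$ or $\big(\frac{1}{2}(n+1)x,\frac{1}{2}(n-1)x_0\big)$, with the second possibility confined to the case $n\equiv3\pmod4$. Under the standing hypotheses of Section \ref{section5}---namely $n\not\equiv3\pmod4$, or else $n\equiv3\pmod4$ together with $\big(c_1(M),c_1(D)\big)\neq\big(\frac{1}{2}(n+1)x,\frac{1}{2}(n-1)x_0\big)$---the second alternative is excluded, so we necessarily have $c_1(M)=(n+1)x$.

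With $c_1(M)=(n+1)x$ secured, I would take $L$ to be the line bundle with $c_1(L)=x$, which is admissible because $b_1(M)=0$. Lemma \ref{condition n+1} then guarantees that $L$ satisfies the Condition $C$ of (\ref{condition C formula}) with $k_0=n+1$, while Lemma \ref{quasi-ample lemma} shows that $L$ is quasi-ample. Thus $L$ fulfils both structural hypotheses needed to apply Theorem \ref{Hattori}.

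It remains only to arrange the numerical equality $k_0=n+1=m$, where $m=|M^{S^1}|$ denotes the number of isolated fixed points. Since $M$ is a homotopy complex projective space by Lemma \ref{homotopy Pn}, its Euler characteristic is $n+1$; and as recalled in Section \ref{subsection2.1}, the number of isolated fixed points of an almost-complex $S^1$-manifold coincides with its Euler characteristic, so $m=n+1$. Combined with $k_0=n+1$ from the previous step, this places us exactly in the equality case of Theorem \ref{Hattori}, whose conclusion asserts that
\[
\big\{k_1^{(i)},\ldots,k_n^{(i)}\big\}=\big\{a_i-a_j~|~j\neq i,~1\leq j\leq n+1\big\},\quad 1\leq i\leq n+1,
\]
i.e. the tangent $S^1$-modules $T_{P_i}M$ are precisely those of (\ref{linear representation}) for the weights $a_1,\ldots,a_{n+1}$ of $L$, these being pairwise distinct as part of the definition of quasi-ampleness.

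Because the substantive analytic and combinatorial work has already been carried out in the earlier lemmas---in particular the Bott residue argument of Lemma \ref{quasi-ample lemma}---I expect no genuinely new obstacle at this stage; the proof is essentially an assembly step. The one point demanding care is the bookkeeping of hypotheses: confirming that excluding the exceptional Chern-class value truly forces $c_1(M)=(n+1)x$, and verifying that $k_0$ and $m$ are \emph{simultaneously} equal to $n+1$, so that the strong equality-case conclusion of Hattori's theorem becomes available rather than merely the inequality $k_0\leq n+1\leq m$.
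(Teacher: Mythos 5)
Your proposal is correct and follows essentially the same route as the paper: Proposition \ref{first Chern class} (with the standing exclusion of the exceptional pair) forces $c_1(M)=(n+1)x$, Lemmas \ref{condition n+1} and \ref{quasi-ample lemma} verify Condition $C$ with $k_0=n+1$ and quasi-ampleness of $L$, and the equality case $k_0=n+1=m$ of Theorem \ref{Hattori} yields the weight structure (\ref{linear representation}). Your only addition is to spell out explicitly why $m=n+1$ (Euler characteristic of a homotopy $\mathbb{P}^n$ equals the number of isolated fixed points), a point the paper leaves implicit.
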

\begin{proof}
By Proposition \ref{first Chern class} we have $c_1(M)=(n+1)x$. Lemmas \ref{condition n+1} and \ref{quasi-ample lemma} imply that the line bundle $L$ over $M$ is quasi-ample and satisfies the Condition C in (\ref{condition C formula}) with $k_0=n+1=m$. Then Theorem \ref{Hattori} tells us that the weights $k_j^{(i)}$ at $P_i$ are of the form
\be\label{4}\big\{k_1^{(i)},\ldots,k_n^{(i)}\big\}=
\big\{a_i-a_j~|~j\neq i,~1\leq j\leq n+1\big\}
,\quad1\leq i\leq n+1,\ee
where these pairwise integers $a_1,\ldots,a_{n+1}$ are weights of the quasi-ample $L$ at $P_i$ with respect to some lifting of the circle action on $M$. This means that the complex $S^1$-modules $T_{P_i}M$ are given by
$$T_{P_i}M=\sum_{\overset{1\leq j\leq n+1,}{j\neq i}}t^{a_i-a_j},\quad 1\leq i\leq n+1,$$
which are isomorphic to those $T_{P_i}\mathbb{P}^n$ in (\ref{linear representation}).
\end{proof}

The $S^1$-invariant symplectic hypersurface $D$ is also a symplectic manifold endowed with the Hamiltonian circle action induced from that on $M$ with $n$ isolated fixed points. Since $D^{S^1}\subset M^{S^1}$, we may assume without loss of generality that $D^{S^1}=\{P_1,\ldots,P_n\}$.
\begin{lemma}\label{lemma D}
The $S^1$-modules $T_{P_i}D$ \rm($1\leq i\leq n$\rm) are the same
as (\ref{linear representation2}) for the above-mentioned integers $a_1,\ldots,a_{n}$.
\end{lemma}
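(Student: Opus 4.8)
The plan is to work fixed point by fixed point and identify, at each $P_i\in D^{S^1}$ (for $1\le i\le n$), exactly which tangent weight of $M$ is the normal weight of $D$. Since $D$ is an $S^1$-invariant almost-complex hypersurface, at each such $P_i$ the constructions of Section \ref{subsection2.1} give an $S^1$-equivariant splitting $T_{P_i}M=T_{P_i}D\oplus (N_D)_{P_i}$. Consequently the single normal weight $b_i$ (the weight of the one-dimensional module $(N_D)_{P_i}$) is one of the tangent weights $k_j^{(i)}$ of $M$ at $P_i$. By the previous lemma these tangent weights form the set $\{a_i-a_j \mid 1\le j\le n+1,\ j\neq i\}$, so $b_i=a_i-a_{\sigma(i)}$ for some index $\sigma(i)\neq i$, and the weights of $T_{P_i}D$ are precisely the complementary set. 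It therefore suffices to prove that $\sigma(i)=n+1$ for every $i$; this yields $T_{P_i}D=\sum_{1\le j\le n,\ j\neq i}t^{a_i-a_j}$, which is exactly (\ref{linear representation2}).

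The crucial step I would carry out next is to pin down the normal weights by comparing $N_D$ with $i^{\ast}L$, where $L$ is the admissible bundle with $c_1(L)=x$ from Lemma \ref{condition n+1}. Both $N_D$ and $i^{\ast}L$ are complex line bundles over $D$ with first Chern class $x_0$: for $N_D$ this is the normalization $c_1(N_D)=x_0$ in (\ref{cohomology data}), and for $i^{\ast}L$ it is $i^{\ast}x=x_0$. Since $D$ is a homotopy $\mathbb{P}^{n-1}$ by Lemma \ref{homotopy Pn}, complex line bundles over $D$ are classified by their first Chern class, so $N_D\cong i^{\ast}L$ as complex line bundles. Both are admissible (as $b_1(D)=0$ by Lemma \ref{Morse function}), and the weights of $i^{\ast}L$ at $P_i$ are exactly $a_i$, obtained by restricting to $D$ the fixed lifting of $L$. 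By the fact recalled in Section \ref{subsection2.2}, the weights of two liftings on one admissible line bundle differ by a simultaneous integer; applied to the two liftings on $N_D\cong i^{\ast}L$ this gives $b_i=a_i+c$ for a single constant $c$ independent of $i$.

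Finally I would combine the two descriptions of $b_i$. The identity $a_i-a_{\sigma(i)}=b_i=a_i+c$ forces $a_{\sigma(i)}=-c$ for all $i$; since the integers $a_j$ are pairwise distinct, there is at most one index taking this value, so $\sigma(i)$ is a single index $j_0$ independent of $i$. Because $\sigma(i)\neq i$ for every $i\in\{1,\ldots,n\}$, the index $j_0$ cannot belong to $\{1,\ldots,n\}$, and hence $j_0=n+1$. Therefore $b_i=a_i-a_{n+1}$, the weights of $T_{P_i}D$ are $\{a_i-a_j\mid 1\le j\le n,\ j\neq i\}$, and the conclusion follows.

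I expect the main obstacle to be the comparison step of the middle paragraph, namely establishing $N_D\cong i^{\ast}L$ as complex line bundles and then upgrading this to the statement that their equivariant weights differ only by a simultaneous constant. This is where the homotopy-$\mathbb{P}^{n-1}$ structure of $D$ (to classify line bundles by $c_1$) and the admissibility coming from $b_1(D)=0$ are genuinely used; once $b_i=a_i+c$ is in hand, the identification $\sigma(i)=n+1$ is a short combinatorial consequence of the distinctness of the $a_j$ and the fact that $P_{n+1}\notin D$.
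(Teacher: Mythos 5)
Your proof is correct, and it implements the same underlying principle as the paper but through the dual bundle. The paper argues via the tangent bundle of $D$: since $c_1(D)=nx_0$ by Proposition \ref{first Chern class} and $c_1(L_D)=x_0$, the determinant line bundle $\bigwedge^{n-1}(T_D,J)$ is isomorphic to $L_D^{\otimes n}$, so the two-liftings argument of Lemma \ref{condition n+1} shows that the \emph{sum} of the $n-1$ weights of $T_{P_i}D$ equals $na_i$ plus a simultaneous constant; since these weights form an $(n-1)$-element subset of the $n$-element set $\{a_i-a_j \mid j\neq i,\ 1\leq j\leq n+1\}$, the sum condition forces the deleted element to be $a_i-a_{n+1}$ for every $i$. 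You instead argue via the normal bundle: $c_1(N_D)=x_0=c_1(i^{\ast}L)$ gives $N_D\cong i^{\ast}L$, and the same two-liftings principle pins down the \emph{single} normal weight as $b_i=a_i+c$, after which the distinctness of the $a_j$ together with $\sigma(i)\neq i$ forces the deleted index to be $n+1$. The two arguments are formally dual through the exact sequence (\ref{short exact sequence}), and in both the combinatorial core is the same statement that $a_{\sigma(i)}$ is independent of $i$; what your version buys is a small economy — it uses only the normalization $c_1(N_D)=x_0$ already fixed in (\ref{cohomology data}) rather than $c_1(D)=nx_0$, and it identifies the excluded index directly from $P_{n+1}\notin D$ instead of tracking the sum bookkeeping. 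One point you should make explicit in the middle step: the isomorphism $N_D\cong i^{\ast}L$ is a priori non-equivariant, so one transports the natural lifting on $N_D$ through it to obtain a second lifting on $i^{\ast}L$ (with the same fixed-point weights $b_i$) before invoking the simultaneous-constant fact; this is precisely the move the paper itself makes in Lemma \ref{condition n+1} for $L_0=L^{\otimes(n+1)}$, so it closes without difficulty.
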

\begin{proof}
The restriction of $L$ to $D$, $L_D$, is also quasi-ample over $D$, because, with respect to the same lifting above, the weights of $L_D$ at $P_i$ are the $n$ pairwise integers $a_1,\ldots,a_n$.

For each $1\leq i\leq n$, the $S^1$-module $T_{P_i}D$ is a submodule of $T_{P_i}M$ and therefore the $n-1$ weights of the $S^1$-module $T_{P_i}D$ belong to the following $n$-element set
\be\label{5}\big\{a_i-a_j~|~j\neq i,~1\leq j\leq n+1\big\}\ee
due to (\ref{4}).

Note that $c_1(D)=nx_0$ due to Proposition \ref{first Chern class} and $c_1(L_D)=x_0$, which imply that $c_1(D)=nc_1(L_D)$. The same arguments as in the proof of Lemma \ref{condition C formula n+1} deduce that, for each $1\leq i\leq n$, the sum of the $(n-1)$ weights of the $S^1$-module $T_{P_i}D$ and $na_i$ differs by a simultaneous integer. Since the $(n-1)$ weights come from the $n$-element set (\ref{5}), the only possibility is that the $(n-1)$ weights at $T_{P_i}D$ are $$\{a_i-a_j~|~j\neq i,~1\leq j\leq n\},\quad 1\leq i\leq n,$$ 
and the simultaneous difference from $na_i$ is $-(a_1+\ldots+a_n)$, as easily checked. This means that
$$T_{P_i}D=\sum_{\overset{1\leq j\leq n,}{j\neq i}}t^{a_i-a_j},\quad 1\leq i\leq n,$$
which is the same as $T_{P_i}\mathbb{P}^{n-1}$ in (\ref{linear representation2}).
\end{proof}

\begin{lemma}
The Chern classes of $M$ and $D$ are the same as those of $\mathbb{P}^n$ and $\mathbb{P}^{n-1}$ respectively.
\end{lemma}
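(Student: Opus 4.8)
The plan is to reduce the determination of the ordinary Chern classes to a comparison of Chern \emph{numbers} with those of $\mathbb{P}^n$ and $\mathbb{P}^{n-1}$, exploiting the two lemmas just proved, which identify the tangent $S^1$-representations at the fixed points. The key input I would record first is that, by those lemmas, the weights of the circle action on $M$ at $P_i$ are exactly the multiset $\{a_i-a_j \mid j\neq i,\ 1\leq j\leq n+1\}$, and the weights on $D$ at $P_i$ are $\{a_i-a_j \mid j\neq i,\ 1\leq j\leq n\}$; these are precisely the weight multisets occurring for the standard linear actions on $\mathbb{P}^n$ and $\mathbb{P}^{n-1}$ associated with the same integers $a_1,\ldots,a_{n+1}$.

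Next I would invoke the Bott residue formula in its general form (Lemma \ref{Bott residue formula} and \cite{Bo2}; the paper recorded only the case of $c_1^n$, so I would cite the general statement): for an almost-complex $S^1$-manifold with isolated fixed points, \emph{every} Chern number is a universal rational function of the weight multisets at the fixed points. Since these data coincide for $M$ and $\mathbb{P}^n$, and for $D$ and $\mathbb{P}^{n-1}$, it follows at once that all Chern numbers of $M$ agree with those of $\mathbb{P}^n$, and all Chern numbers of $D$ agree with those of $\mathbb{P}^{n-1}$. Alternatively, one could observe that Theorem \ref{Hattori} has already yielded that $M$ is unitary cobordant to $\mathbb{P}^n$ (and the same verification applied to $D$ gives unitary cobordance to $\mathbb{P}^{n-1}$), whence equality of Chern numbers is immediate from cobordism invariance; I would probably mention this as a shortcut but carry out the localization argument since it uses the weight identities directly.

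Finally I would upgrade equality of Chern numbers to equality of individual Chern classes. Since $M$ is a cohomology $\mathbb{P}^n$ by Lemma \ref{homotopy Pn}, I write $c_i(M)=\gamma_i x^i$ with $\gamma_i\in\mathbb{Z}$, and recall from Proposition \ref{first Chern class} that $c_1(M)=(n+1)x$, so $\gamma_1=n+1$. Pairing with the fundamental class gives the mixed Chern number $(c_ic_1^{n-i})[M]=\gamma_i(n+1)^{n-i}$; equating this with the corresponding Chern number $\binom{n+1}{i}(n+1)^{n-i}$ of $\mathbb{P}^n$ and dividing by $(n+1)^{n-i}\neq 0$ yields $\gamma_i=\binom{n+1}{i}$, i.e.\ $c(M)=(1+x)^{n+1}$. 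The identical argument for $D$, writing $c_j(D)=\delta_j x_0^j$ and using $c_1(D)=nx_0$ (so $\delta_1=n$), gives $\delta_j=\binom{n}{j}$, i.e.\ $c(D)=(1+x_0)^n$, which are the standard Chern classes of $\mathbb{P}^{n-1}$.

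The step I expect to be the main subtlety is the passage from mixed Chern numbers to individual Chern classes: this works precisely because $c_1(M)$ is a \emph{nonzero} multiple $(n+1)x$ of the generator, so that $c_1^{n-i}$ is a nonzero multiple of $x^{n-i}$ and pairing $c_i=\gamma_i x^i$ against it reads off $\gamma_i$ unambiguously. This is exactly the point at which the hypothesis excluding the exceptional value $c_1(M)=\tfrac12(n+1)x$ enters, since in that case the matching with $\mathbb{P}^n$ would already fail at the level of $c_1$. I would also take care to state the Bott residue formula for general Chern numbers rather than only for $c_1^n$ as recorded earlier.
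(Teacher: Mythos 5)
Your proposal is correct, and its decisive step --- writing $c_i(M)=\gamma_i x^i$, pairing against $c_1^{n-i}$ with $c_1(M)=(n+1)x$ to get $c_ic_1^{n-i}[M]=\gamma_i(n+1)^{n-i}$, and matching with $\binom{n+1}{i}(n+1)^{n-i}$ --- is exactly the paper's argument, run identically for $D$. The only divergence is in how equality of Chern numbers with $\mathbb{P}^n$ (resp.\ $\mathbb{P}^{n-1}$) is obtained: the paper simply quotes the unitary cobordism of $M$ with $\mathbb{P}^n$ already furnished by Theorem \ref{Hattori}, i.e.\ the ``shortcut'' you yourself mention, whereas your primary route carries out a localization argument from the weight identities via the general Bott residue formula. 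Both are valid; the cobordism route costs nothing since Theorem \ref{Hattori} has already been invoked, while your localization route is more self-contained at the fixed-point level but, as you correctly flag, requires citing the Bott formula for \emph{all} Chern numbers, whereas the paper recorded only the $c_1^n$ case. One small correction to your closing remark: the exclusion of the exceptional value $c_1(M)=\frac{1}{2}(n+1)x$ does not really enter at the pairing step --- even $\frac{1}{2}(n+1)x$ is a nonzero multiple of $x$, so the pairing would still read off $\gamma_i$ unambiguously if one had the Chern numbers. That hypothesis was consumed earlier, in Proposition \ref{first Chern class} and Lemma \ref{condition n+1}, to secure $c_1(M)=(n+1)x$ and hence Condition C with $k_0=n+1$; without it neither the cobordism statement nor your weight identities would be available in the first place.
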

\begin{proof}
Since $M$ is unitary cobordant to $\mathbb{P}^n$ by Theorem \ref{Hattori}, the Chern numbers of $M$ are the same as those of $\mathbb{P}^{n}$. Note that $c_1(M)=(n+1)x$, hence for each $2\leq i\leq n$ we have
$$c_ic_1^{n-i}[M]=c_ic_1^{n-i}[\mathbb{P}^n]=
\binom{n+1}{i}(n+1)^{n-i},$$
which implies that $c_i(M)=\binom{n+1}{i}x^i$. This completes the proof for $M$. The same arguments as above give the desired proof for $D$.
\end{proof}


\begin{thebibliography}{9999999}
\bibitem[AH70]{AH}
{M. Atiyah, F. Hirzebruch}:
\newblock {\em Spin-Manifolds and Group Actions},
\newblock Essays on Topology and Related Topics, M\'{e}moires d\'{e}di\'{e}s \`{a} Georges de Rham, A. Haefliger and R. Narasimhan, ed., Springer-Verlag, New York-Berlin, 1970, pp. 18-28.

\bibitem[AS68]{AS}
{M.F. Atiyah, I.M. Singer}:
\newblock {\em The index theory of elliptic operators: III},
\newblock Ann. Math. {\bf 87} (1968), 546-604.

\bibitem[Bo56]{Bo}
{R. Bott}:
\newblock {\em An application of the Morse theory to the topology of Lie groups},
\newblock  Bull. Soc. Math. France, {\bf 84} (1956), 251-281.

\bibitem[Bo67]{Bo2}
{R. Bott}:
\newblock {\em Vector fields and characteristic numbers},
\newblock  Michigan Math. J. {\bf 14} (1967), 231-244.

\bibitem[Br93]{Br}
{G. E. Bredon}:
\newblock {\em Topology and Geometry},
\newblock  volume 139 of \emph{Graduate Texts in Mathematics}. Springer-Verlag, New York, 1993.

\bibitem[BM78]{BM}
{L. Brenton, J. Morrow}:
\newblock {\em Compactifications of $\mathbb{C}^n$},
\newblock  Trans. Amer. Math. Soc. {\bf 246} (1978), 139-153.

\bibitem[Fr59]{Fr59}
{T. Frankel}:
\newblock {\em Fixed points and torsion on K\"{a}hler manifolds},
\newblock Ann. of Math. (2) {\bf 70} (1959), 1-8.

\bibitem[Fu80]{Fu80}
{T. Fujita}:
\newblock {\em On topological characterizations of complex projective spaces and affine linear spaces},
\newblock  Proc. Japan Acad. Sci. {\bf 56} (1980), 231-234.

\bibitem[Hat02]{Hat}
{A. Hatcher}:
\newblock {\em Algebraic Topology},
\newblock Cambridge University Press, Cambridge, 2002.

\bibitem[Ha78]{Ha78}
{A. Hattori}:
\newblock {\em Spin$^{c}$-structures and $S^1$-actions},
\newblock Invent. Math. {\bf 48} (1978), 7-31.

\bibitem[Ha85]{Ha85}
{A. Hattori}:
\newblock {\em $S^1$-actions on unitary manifolds and quasi-ample line bundles},
\newblock J. Fac. Sci. Univ. Tokyo Sect. IA Math. {\bf 31} (1985), 433-486.


\bibitem[Ha86]{Ha86}
{A. Hattori}:
\newblock {\em Almost complex $S^1$-actions on cohomology almost projective space},
\newblock in: Transformation Groups, Pozna\'{n}, 1985, Lecture Notes in Mathematics, Vol. 1217, Springer, Berlin, 1986, pp. 115-122.

\bibitem[HY76]{HY}
{A. Hattori, T. Yoshida}:
\newblock {\em Lifting compact group actions into fiber bundles},
\newblock Japan J. Math. {\bf 2} (1976), 13-25.

\bibitem[Hi54]{Hi54}
{F. Hirzebruch}:
\newblock {\em Some problems on differentiable and complex manifolds},
\newblock Ann of Math.(2) {\bf 60} (1954), 213-236.

\bibitem[Hi66]{Hi66}
{F. Hirzebruch}:
\newblock {\em Topological methods in algebraic geometry},
\newblock third Edition, Springer, Berlin (1966).

\bibitem[HBJ92]{HBJ}
{F. Hirzebruch, T. Berger, R. Jung}:
\newblock {\em Manifolds and modular forms},
\newblock Aspects of Mathematics, E20, Friedr. Vieweg and Sohn, Braunschweig (1992).

\bibitem[Ki84]{Ki}
{F. Kirwan}:
\newblock {\em Cohomology of quotients in symplectic and algebraic geometry},
\newblock Mathematical Notes, vol. 31, Princeton University Press, Princeton, NJ (1984).

\bibitem[KO73]{KO}
{S. Kobayashi, T. Ochiai}:
\newblock {\em Characterizations of complex projective
spaces and hyperquadrics},
\newblock J. Math. Kyoto Univ. {\bf 13} (1973), 31-47.

\bibitem[Ko70]{Ko70}
{C. Kosniowski}:
\newblock {\em Applications of the holomorphic Lefschetz formula},
\newblock Bull. London Math. Soc. {\bf 2} (1970), 43-48.

\bibitem[LM89]{LM}
{H. Lawson, M. Michelson}:
\newblock {\em Spin Geometry},
\newblock Princeton University Press, Princeton, New
Jersey, 1989.

\bibitem[LZ25]{LZ}
{C. Li, Z. Zhou}:
\newblock {\em K\"{a}hler compactification of $\mathbb{C}^n$ and Reeb dynamics},
\newblock Invent. math. (2025). https://doi.org/10.1007/s00222-025-01377-2.

\bibitem[Li12a]{Li12a}
{P. Li}:
\newblock {\em The rigidity of Dolbeault-type operators and symplectic circle actions},
\newblock Proc. Amer. Math. Soc. {\bf 140} (2012), 1987-1995.

\bibitem[Li12b]{Li12b}
{P. Li}:
\newblock {\em Circle action, lower bound of fixed points and characteristic numbers},
\newblock  J. Fixed Point Theory Appl. {\bf 11} (2012), 245-251.

\bibitem[Li15]{Li15}
{P. Li}:
\newblock {\em $-1$-Phenomena for the pluri $\chi_y$-genus and elliptic genus},
\newblock Pacific J. Math. {\bf 273} (2015), 331-351.

\bibitem[Li17]{Li17}
{P. Li}:
\newblock {\em The Hirzebruch $\chi_y$-genus and Poincar\'{e} polynomial revisited},
\newblock Commun. Contemp. Math. {\bf 19} (2017), 19pp.

\bibitem[Li19]{Li19}
{P. Li}:
\newblock {\em K\"{a}hler hyperbolic manifolds and Chern number inequalities},
\newblock Trans. Amer. Math. Soc. {\bf 372} (2019), 6853-6868.

\bibitem[LL11]{LL}
{P. Li, K. Liu}:
\newblock {\em Some remarks on circle on manifolds},
\newblock Math. Res. Lett. {\bf 18} (2011), 437-446.

\bibitem[LP25]{LP}
{P. Li, T. Peternell}:
\newblock {\em Compactification of homology cells, Fujita's conjectures and the complex projective space},
\newblock arXiv:2502.01072v2.

\bibitem[LW90]{LW}
{A. Libgober, J. Wood}:
\newblock {\em Uniqueness of the complex structure on K\"{a}hler manifolds of certain homology types},
\newblock J. Differential Geom. {\bf 30} (1990), 139-154.

\bibitem[Lic63]{Lic}
{A. Lichnerowicz}:
\newblock {\em Spineurs harmoniques},
\newblock  C. R. Acad. Sci. Paris {\bf 257} (1963), 7-9.

\bibitem[Mi80]{Mi}
{M. Michelsohn}:
\newblock {\em Clifford and spinor cohomology of K\"{a}hler manifolds},
\newblock Amer. J. Math. {\bf 102} (1980), 1083-1146.

\bibitem[MS74]{MS}
{J.W. Milnor, J.D. Stasheff}:
\newblock {\em Characteristic classes},
\newblock Princeton University Press, Princeton, N. J., 1974. Annals of Mathematics Studies, No. 76.

\bibitem[NR79]{NR}
{M. S. Narasimhan, S. Ramanan}:
\newblock {\em Generalized Prym Varieties as Fixed Points},
\newblock J. Indian Math. Soc. {\bf 39} (1979) 1-19.

\bibitem[Pe24]{Pet}
{T. Peternell}:
\newblock {\em Compactifications of $\mathbb{C}^n$ and the complex projective space},
\newblock arXiv:2410.07207.

\bibitem[Pe72]{Pe}
{T. Petrie}:
\newblock {\em Smooth $S^1$-actions on homotopy complex projective spaces and related topics},
\newblock Bull. Amer. Math. Soc. {\bf 78} (1972), 105-153.

\bibitem[Sa96]{Sa}
{S. Salamon}:
\newblock{\em On the Cohomology of K\"{a}hler and Hyper-K\"{a}hler
  Manifolds},
\newblock Topology {\bf 35} (1996), 137-155.

\bibitem[SY80]{SY}
{Y.-T. Siu, S.-T. Yau}:
\newblock {\em Compact K\"{a}hler manifolds of positive bisectional curvature},
\newblock Invent. Math. {\bf 59} (1980), 189-204.

\bibitem[So76]{So}
{A. Sommese}:
\newblock {\em On manifolds that cannot be ample divisors},
\newblock Math. Ann. {\bf 221} (1976), 55-72.

\bibitem[To10]{To10}
{S. Tolman}:
\newblock {\em On a symplectic generalization of Petrie's conjecture},
\newblock Trans. Amer. Math. Soc. {\bf 362} (2010), 3963-3996.


\end{thebibliography}
\end{document}